\newtheorem{theorem}{Theorem}
\newtheorem{corollary}[theorem]{Corollary}
\newtheorem{example}[theorem]{Example}
\newenvironment{proof}[1][Proof]{\noindent\textbf{#1.} }{}
\begin{document}

\title{Spread polynomials, rotations and the butterfly effect}
\date{}
\author{Shuxiang Goh\\35 Firmin Court\\Mermaid Waters \\Queensland 4218\\Australia
\and N. J. Wildberger\\School of Mathematics\\and Statistics, UNSW \\Sydney 2052 \\Australia}
\maketitle

\begin{abstract}
The spread between two lines in rational trigonometry replaces the concept of
angle, allowing the complete specification of many geometrical and dynamical
situations which have traditionally been viewed approximately. This paper
investigates the case of powers of a rational spread rotation, and in
particular, a curious periodicity in the prime power decomposition of the
associated values of the spread polynomials, which are the analogs in rational
trigonometry of the Chebyshev polynomials of the first kind. Rational
trigonometry over finite fields plays a role, together with non-Euclidean geometries.

\end{abstract}

\section{Introduction}

This paper investigates the role of the \textit{spread polynomials}
$S_{n}\left(  s\right)  $ of \textit{rational trigonometry} in understanding a
simple dynamical system arising in elementary geometry. Spread polynomials
were introduced in \cite{Wild}, and arise naturally when Euclidean geometry is
studied \textit{algebraically,} with angles replaced by spreads, and they play
also a role in geometry over finite fields, and in fact over general fields
not of characteristic two. Furthermore their importance extends to
\textit{elliptic }and \textit{hyperbolic }geometries, see \cite{WildHyp}.
Spread polynomials also have interesting number theoretical properties that
set them apart from Chebyshev and other orthogonal polynomials, see
\cite{Wild} and \cite{Goh}.

In this paper we study a phenomenon which occurs when we analyze iterates, or
powers, of a particular rotation $\rho$ in the Euclidean plane. If $\rho$ is
the rotation by an angle $\theta$ which is a rational multiple of $\pi,$ then
the subsequent powers $\rho^{n}$ are easily understood. However when $\theta$
is an irrational multiple of $\pi$, the situation exhibits chaotic features,
and illustrates the `butterfly effect': the future state of the system is
determined by the accuracy by which we know the real numbers which specify
initial conditions, and even a small error will eventually result in complete
uncertainty---this is the standard view.

With rational trigonometry, \textit{increasing uncertainty} is replaced with
\textit{increasing complexity}. By measuring rotations $\rho$ with a
\textit{spread }$s$, not an \textit{angle} $\theta$, the spread polynomials
arise naturally from iterates, and the chaos disappears, replaced instead by
an increasing escalation in the size and complexity of the numbers
$S_{n}\left(  s\right)  $ that describe the evolution of the system. These
numbers are far from random, and they tend to incorporate interesting, and
indeed mysterious, number theoretic information.

For a rational spread $s,$ we will prove a theorem about the spreads
$S_{n}\left(  s\right)  $ associated to the powers $\rho^{n}$ of the rotation
$\rho$ with initial spread $s$. Such rational spreads are common in geometry,
for example any configuration of lines with rational equations has only
rational spreads. Of particular interest is that we will need to examine
\textit{finite geometries} in order to understand the situation over the
rational numbers, and that forms of \textit{non-Euclidean geometry} also
arise. Our work can also be viewed in the context of investigations of
Chebyshev polynomials over a finite field, initiated already in \cite{Schur};
see also \cite{Bang} and \cite{Rankin}.

\subsection{Powers of a rotation}

To provide some motivation for the spread polynomials, recall that with the
ISO size standards, an $A4$ sheet of paper has proportions in the ratio
$\sqrt{2}:1.$ So if you fold a piece in two lengthwise, the result has the
same proportions but only one half of the area, and is called $A5$ size. The
angle in degrees between the long side $l_{0}$ and the diagonal $l_{1}$ of an
$A4$ sheet may be described by the formula
\[
\theta_{ISO}\equiv\left(  180\arcsin\sqrt{1/3}\right)  /\pi.
\]
This has a numerical value in degrees of approximately $\theta_{ISO}%
\approx35.26.$

Suppose we rotate $l_{1}$ by $\theta_{ISO}$ to get $l_{2},$ then rotate
$l_{2}$ by $\theta_{ISO}$ to get $l_{3},$ and so on. We could also say that
$l_{2}$ is the reflection of $l_{0}$ in $l_{1},$ that $l_{3}$ is the
reflection of $l_{1}$ in $l_{2},$ and so on. This gives a sequence\textbf{\ }%
$l_{1},l_{2},\cdots$ of concurrent lines whose respective angles with $l_{0}$
form the sequence $\theta_{ISO},2\theta_{ISO},3\theta_{ISO},\cdots.$ Since
$\theta_{ISO}$ is not a rational multiple of $\pi,$ these sequences of lines
and angles will never repeat, and so the lines $l_{n}$ are all distinct, as
are the angles $n\theta_{ISO}$.%
\begin{figure}
[h]
\begin{center}
\includegraphics[
height=6.8442cm,
width=6.9852cm
]%
{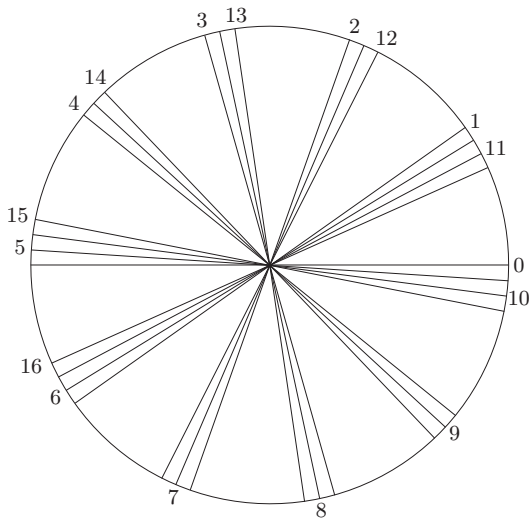}%
\caption{Multiples of $\theta_{ISO}$, or equivalently $s=1/3$}%
\label{Multiples}%
\end{center}
\end{figure}

Figure \ref{Multiples} shows the first sixteen such lines, exhibiting the
famous \textit{three gaps phenomenon, }which was originally a conjecture of
Steinhaus, (see for example \cite{Rav}): for any $n,$ there are at most three
different angles found between any two adjacent lines in the set $\left\{
l_{0},l_{1},l_{2},\cdots l_{n}\right\}  $.

The angle between $l_{0}$ and $l_{16}$ is approximately
\[
16\theta_{ISO}\approx16\times35.26\operatorname{mod}360=\allowbreak
564.\,\allowbreak16-360=\allowbreak204.\,\allowbreak16.
\]
We have lost information, however, and with the better initial approximation
$\theta_{ISO}\approx35.\,\allowbreak264\,389\,682$ we get instead
\[
16\theta_{ISO}\approx204.\,\allowbreak230\,235.
\]
But no matter what accuracy we have for $\theta_{ISO},$ computing larger and
larger multiples of it, mod $360,$ inevitably results in increasing
error---until after a finite number of multiples \textit{all knowledge of the
position is lost}. This is a simple example of the
well-known\textit{\ butterfly effect}, a feature of a wide range of dynamical
systems, and an inevitable consequence of working in the framework of
classical trigonometry, which generally deals only with \textit{approximations
}to real values. The usual idea is that our understanding of the future
evolution of many completely specified systems is limited by the precision
with which we know the real numbers that appear in the initial conditions.

With \textit{rational trigonometry, }introduced in\textit{\ }\cite{Wild}%
\textit{, }we set our sights higher---we aim to describe such a
system\textit{\ completely precisely, }until we run out of computing power,
memory space, or patience. The price we pay for \textit{more accuracy}
is---\textit{more complexity}. For dynamical systems which can be expressed
using rational numbers and polynomial transformations, the future evolution is
completely known, but will generally be increasingly difficult and expensive
to write down as time goes on. We will show that rotation by the angle
$\theta_{ISO}$ exhibits this phenomenon, and brings out interesting number
theoretical questions.

It will be important for us to consider not just rotations in the Euclidean
plane, but also over finite prime fields. For this it is often more convenient
to work not with a unit circle, but rather with the \textit{associated
projective line}. The metrical geometry of the projective line is described by
a \textit{projective version of rational trigonometry}, which also extends to
higher projective spaces, and it is in terms of this that we formulate
rotations and reflections.

\subsection{Basic rational trigonometry}

Here is a quick review of some main ideas from affine rational trigonometry in
the plane; the main reference is \cite{Wild}, see also \cite{WildHorizons}. We
work over a general field, not of characteristic two. The primary measurement
is the \textit{quadrance }between two points $A_{1}\equiv\left[  x_{1}%
,y_{1}\right]  $ and $A_{2}\equiv\left[  x_{2},y_{2}\right]  $, defined by%
\[
Q\left(  A_{1},A_{2}\right)  \equiv\left(  x_{2}-x_{1}\right)  ^{2}+\left(
y_{2}-y_{1}\right)  ^{2}.
\]
Over the real numbers, quadrance is the square of distance---or better yet
\textit{distance is the square root of quadrance}. However in rational
trigonometry we wish to avoid square roots, and so distance plays no role.
This allows the theory to extend to more general fields.

The short side, long side and diagonal of an $A4$ sheet of paper form a right
triangle with quadrances $Q_{1},Q_{2}$ and $Q_{3}$ in the ratio $1:2:3, $ and
Pythagoras' theorem takes the form
\[
Q_{1}+Q_{2}=Q_{3}.
\]

In rational trigonometry the separation of two lines is measured by a
\textit{spread}, not an \textit{angle}. A line $l$ with equation $ax+by+c=0$
is a \textbf{null line} precisely when $a^{2}+b^{2}=0.$ The \textbf{spread}
between two non-null lines with equations
\[%
\begin{tabular}
[c]{lllll}%
$a_{1}x+b_{1}y+c_{1}=0$ &  & \textrm{and } &  & $a_{2}x+b_{2}y+c_{2}=0$%
\end{tabular}
\]
is defined to be the number%
\[
s\equiv\frac{\left(  a_{1}b_{2}-a_{2}b_{1}\right)  ^{2}}{\left(  a_{1}%
^{2}+b_{1}^{2}\right)  \left(  a_{2}^{2}+b_{2}^{2}\right)  }.
\]
This formula also gives the spread between lines with direction vectors
$\left(  a_{1},b_{1}\right)  $ and $\left(  a_{2},b_{2}\right)  $. Over the
real numbers the spread $s$ between two lines lies between $0$ and $1,$ being
$0$ when lines are parallel, $1$ when lines are perpendicular, and in general
the square of the sine of an angle $\theta$ between them. There are many such
possible angles (e.g. $\theta,$ $\pi-\theta,2\pi+\theta,3\pi-\theta,\cdots$),
but the square of the sine---the \textit{spread}---is the same for all.

The spread between two non-null lines can be expressed as a ratio of two
quadrances---an opposite quadrance to a hypotenuse quadrance in a right
triangle formed from those lines, so for example the spread between the long
side $l_{0}$ of a piece of $A4$ paper and its diagonal $l_{1}$ is $s\left(
l_{0},l_{1}\right)  =s=1/3.$ Unlike the notion of angle, spread really is
defined between \textit{lines}, not \textit{rays}. Using spreads instead of
angles makes much of the study of triangles---that is, \textit{trigonometry}%
---dramatically simpler. This is explained at length in \cite{Wild}.

Define a number $s$ to be a \textbf{spread number }precisely when $s\left(
1-s\right)  $ is a square in the field. The Spread number theorem
(\cite[Chapter 6]{Wild}) asserts that a number $s$ is the spread between two
lines precisely when $s$ is a spread number. In the finite prime field
$\mathbb{F}_{p}$ of $p$ elements, there are $\left(  p+3\right)  /2$ or
$\left(  p+1\right)  /2$ spread numbers, depending respectively on whether $p$
is congruent to $3$ $\operatorname{mod}4$ or $1$ $\operatorname{mod}4.$

\subsection{Definition of the spread polynomials}

The \textbf{spread polynomials }$S_{n}\left(  s\right)  $ arise when we
consider the rational analog of \textit{multiples of an angle}. They may be
defined recursively, over a general field, without any reference to geometry
as follows:
\begin{align}
S_{0}\left(  s\right)   &  \equiv0\nonumber\\
S_{1}\left(  s\right)   &  \equiv s\nonumber\\
S_{n}\left(  s\right)   &  \equiv2\left(  1-2s\right)  S_{n-1}\left(
s\right)  -S_{n-2}\left(  s\right)  +2s. \label{Recursive}%
\end{align}

The coefficient of $s^{n}$ in $S_{n}\left(  s\right)  $ is a power of four, so
over any field not of characteristic two, $S_{n}\left(  s\right)  $ is a
polynomial of degree $n$. The generating function for the spread polynomials
was computed by M. Hirschhorn, it is
\begin{equation}
S\left(  s,t\right)  \equiv\sum_{n=0}^{\infty}S_{n}\left(  s\right)
t^{n}=\frac{ts\left(  1+t\right)  }{\left(  1-t\right)  \left(  1-2t+t^{2}%
+4ts\right)  }. \label{SGenerating}%
\end{equation}

The spread polynomials are intimately linked to geometry, in the following
sense. If two intersecting lines $l_{0}$ and $l_{1}$ in the plane make a
spread of $s,$ then we may reflect $l_{0}$ in $l_{1}$ to obtain $l_{2},$
reflect $l_{1}$ in $l_{2}$ to obtain $l_{3},$ and so on. We will see that the
spread $s\left(  l_{0},l_{n}\right)  $ is then $S_{n}\left(  s\right)  $. For
this description we only need the lines through the origin, so it is really a
statement about the associated \textit{projective line}. In fact our statement
of this fact, the Spread of a power theorem, will be stated in sufficient
generality to include also some non-Euclidean geometries, and will involve a
multiplicative structure on the \textit{non-null points of the projective
line}.

Another key fact about the spread polynomials, which connects with their
geometric interpretation, is the following.

\medskip

\textbf{Theorem (Spread composition)} \textit{For any natural numbers }%
$n$\textit{\ and }$m,$\textit{\ }%
\[
S_{n}\circ S_{m}=S_{nm}.
\]

We will give a proof later using the relations between spreads and rotations;
for another see \cite[page 110]{Wild}.

\subsection{Table and graphs of spread polynomials}

Here are the first few spread polynomials. Note that $S_{2}\left(  s\right)  $
is the \textit{logistic map}.%
\begin{align*}
S_{0}\left(  s\right)   &  =0\\
S_{1}\left(  s\right)   &  =s\\
S_{2}\left(  s\right)   &  =4s-4s^{2}=4s\left(  1-s\right) \\
S_{3}\left(  s\right)   &  =9s-24s^{2}+16s^{3}=s\left(  3-4s\right)  ^{2}\\
S_{4}\left(  s\right)   &  =16s-80s^{2}+128s^{3}-64s^{4}=16s\left(
1-s\right)  \left(  1-2s\right)  ^{2}\\
S_{5}\left(  s\right)   &  =25s-200s^{2}+560s^{3}-640s^{4}+256s^{5}=s\left(
5-20s+16s^{2}\right)  ^{2}\\
S_{6}\left(  s\right)   &  =4s\left(  1-s\right)  \left(  3-4s\right)
^{2}\left(  1-4s\right)  ^{2}\\
S_{7}\left(  s\right)   &  =s\left(  7-56s+112s^{2}-64s^{3}\right)  ^{2}\\
S_{8}\left(  s\right)   &  =\allowbreak64s\left(  1-s\right)  \left(
1-2s\right)  ^{2}\left(  1-8s+8s^{2}\right)  ^{2}\allowbreak\\
S_{9}\left(  s\right)   &  =s\left(  3-4s\right)  ^{2}\left(  3-36s+96s^{2}%
-64s^{3}\right)  ^{2}\\
S_{10}\left(  s\right)   &  =4s\left(  1-s\right)  \left(  1-12s+16s^{2}%
\right)  ^{2}\left(  5-20s+16s^{2}\right)  ^{2}\allowbreak
\end{align*}

A remarkable fact, already suggested by this list, is that the spread
polynomials factor in an interesting way, indeed in a more pleasant fashion
than the Chebyshev polynomials. We will establish this in a future paper.

The next Figures show the first eight and twenty five spread polynomials over
the real numbers in the range $0\leq s\leq1.$ Note the interesting ghost
patterns that begin to appear as we increase the number of polynomials shown;
these are related to Lissajous curves, and such a phenomenon occurs also for
Chebyshev polynomials.

Observe also that the spread polynomials are positive in this range, so do not
form an orthogonal family of polynomials in the usual sense unless they are
translated vertically.
\begin{center}
\includegraphics[
height=4.6406cm,
width=7.2831cm
]%
{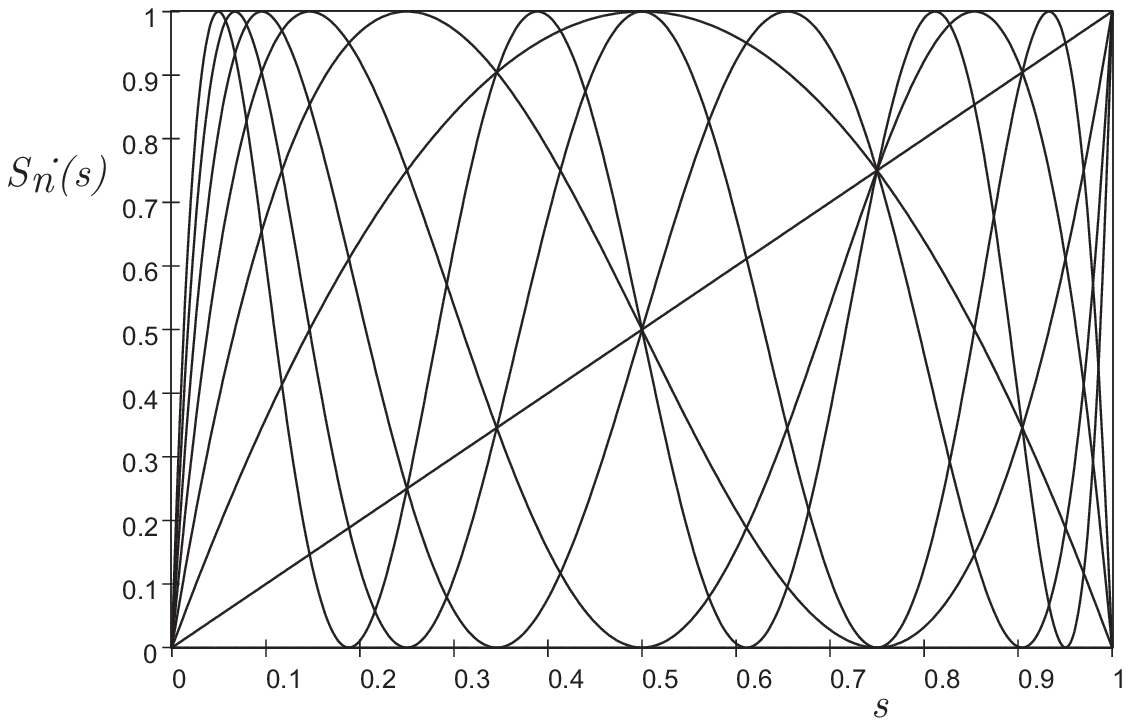}%
\\
Figure 2: The first eight spread polynomials $S_{0}\left(  s\right)  $ to
$S_{7}\left(  s\right)  $%
\end{center}
\qquad\qquad%

\begin{center}
\includegraphics[
height=4.5603cm,
width=7.1377cm
]%
{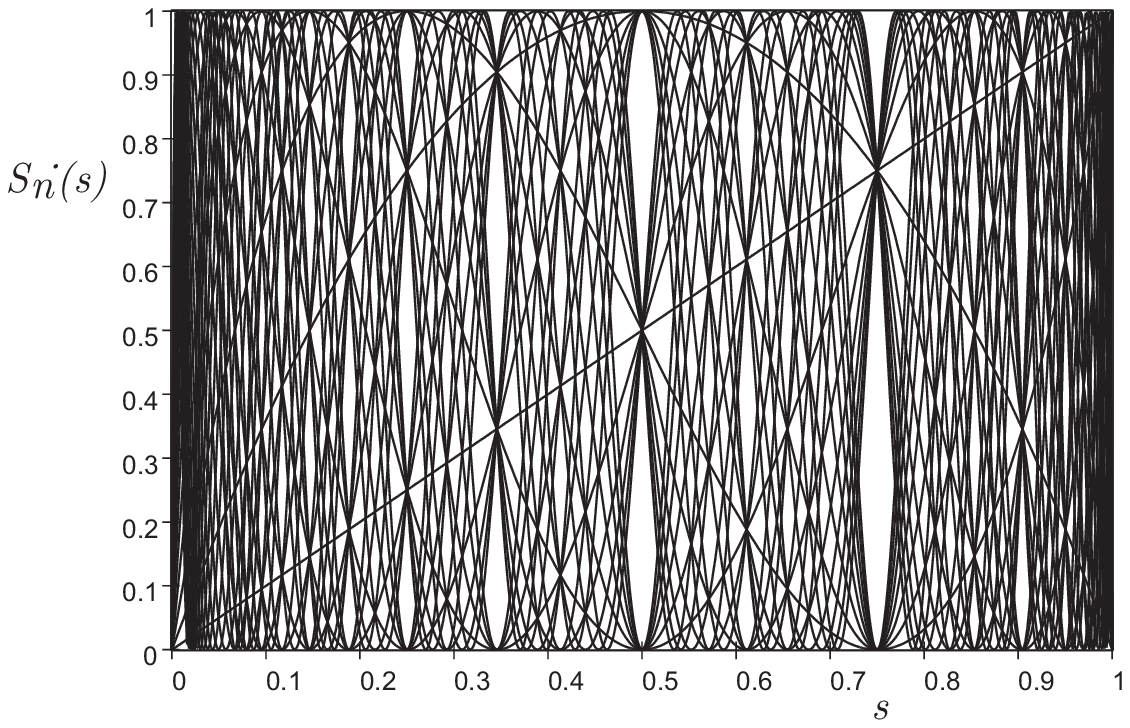}%
\\
Figure 3: The first twenty five spread polynomials
\end{center}

\begin{example}
Here are some values of spread polynomials in the field $\mathbb{F}_{5}.$ The
spread numbers in this field are $0,1$ and $3.$%
\[%
\begin{tabular}
[c]{l||lllll}%
$s$ & $0$ & $1$ & $2$ & $3$ & $4$\\\hline\hline
$S_{0}\left(  s\right)  $ & $0$ & $0$ & $0$ & $0$ & $0$\\
$S_{1}\left(  s\right)  $ & $0$ & $1$ & $2$ & $3$ & $4$\\
$S_{2}\left(  s\right)  $ & $0$ & $0$ & $2$ & $1$ & $2$\\
$S_{3}\left(  s\right)  $ & $0$ & $1$ & $0$ & $3$ & $1$\\
$S_{4}\left(  s\right)  $ & $0$ & $0$ & $2$ & $0$ & $2$\\
$S_{5}\left(  s\right)  $ & $0$ & $1$ & $2$ & $3$ & $4$%
\end{tabular}
\qquad\qquad%
\begin{tabular}
[c]{l||lllll}%
$s$ & $0$ & $1$ & $2$ & $3$ & $4$\\\hline\hline
$S_{6}\left(  s\right)  $ & $0$ & $0$ & $0$ & $1$ & $0$\\
$S_{7}\left(  s\right)  $ & $0$ & $1$ & $2$ & $3$ & $4$\\
$S_{8}\left(  s\right)  $ & $0$ & $0$ & $2$ & $0$ & $2$\\
$S_{9}\left(  s\right)  $ & $0$ & $1$ & $0$ & $3$ & $1$\\
$S_{10}\left(  s\right)  $ & $0$ & $0$ & $2$ & $1$ & $2$\\
$S_{11}\left(  s\right)  $ & $0$ & $1$ & $2$ & $3$ & $4$%
\end{tabular}
\]
The pattern repeats with period $12$, that is $S_{n}\left(  s\right)
=S_{n+12}\left(  s\right)  $ for all $n$ and for all $s.$ However for values
of $s$ which are spread numbers, $S_{n}\left(  s\right)  =S_{n+4}\left(
s\right)  $ for all $n,$ while for values of $s$ which are non-spread numbers
$S_{n}\left(  s\right)  =S_{n+6}\left(  s\right)  $ for all $n.%
\hspace{.1in}\diamond
$
\end{example}

\begin{example}
Here are the values of the spread polynomials in the field $\mathbb{F}_{7}.$
The spread numbers in this field are $0,1,3,4$ and $5.$%
\[
\medskip%
\begin{tabular}
[c]{l||lllllll}%
$s$ & $0$ & $1$ & $2$ & $3$ & $4$ & $5$ & $6$\\\hline\hline
$S_{0}\left(  s\right)  $ & $0$ & $0$ & $0$ & $0$ & $0$ & $0$ & $0$\\
$S_{1}\left(  s\right)  $ & $0$ & $1$ & $2$ & $3$ & $4$ & $5$ & $6$\\
$S_{2}\left(  s\right)  $ & $0$ & $0$ & $6$ & $4$ & $1$ & $4$ & $6$\\
$S_{3}\left(  s\right)  $ & $0$ & $1$ & $1$ & $5$ & $4$ & $3$ & $0$\\
$S_{4}\left(  s\right)  $ & $0$ & $0$ & $6$ & $1$ & $0$ & $1$ & $6$\\
$S_{5}\left(  s\right)  $ & $0$ & $1$ & $2$ & $5$ & $4$ & $3$ & $6$\\
$S_{6}\left(  s\right)  $ & $0$ & $0$ & $0$ & $4$ & $1$ & $4$ & $0$\\
$S_{7}\left(  s\right)  $ & $0$ & $1$ & $2$ & $3$ & $4$ & $5$ & $6$\\
$S_{8}\left(  s\right)  $ & $0$ & $0$ & $6$ & $0$ & $0$ & $0$ & $6$\\
$S_{9}\left(  s\right)  $ & $0$ & $1$ & $1$ & $3$ & $4$ & $5$ & $0$\\
$S_{10}\left(  s\right)  $ & $0$ & $0$ & $6$ & $4$ & $1$ & $4$ & $6$\\
$S_{11}\left(  s\right)  $ & $0$ & $1$ & $2$ & $5$ & $4$ & $3$ & $6$\\
$S_{12}\left(  s\right)  $ & $0$ & $0$ & $0$ & $1$ & $0$ & $1$ & $0$%
\end{tabular}
\qquad\qquad%
\begin{tabular}
[c]{l||lllllll}%
$s$ & $0$ & $1$ & $2$ & $3$ & $4$ & $5$ & $6$\\\hline\hline
$S_{13}\left(  s\right)  $ & $0$ & $1$ & $2$ & $5$ & $4$ & $3$ & $6$\\
$S_{14}\left(  s\right)  $ & $0$ & $0$ & $6$ & $4$ & $1$ & $4$ & $6$\\
$S_{15}\left(  s\right)  $ & $0$ & $1$ & $1$ & $3$ & $4$ & $5$ & $0$\\
$S_{16}\left(  s\right)  $ & $0$ & $0$ & $6$ & $0$ & $0$ & $0$ & $6$\\
$S_{17}\left(  s\right)  $ & $0$ & $1$ & $2$ & $3$ & $4$ & $5$ & $6$\\
$S_{18}\left(  s\right)  $ & $0$ & $0$ & $0$ & $4$ & $1$ & $4$ & $0$\\
$S_{19}\left(  s\right)  $ & $0$ & $1$ & $2$ & $5$ & $4$ & $3$ & $6$\\
$S_{20}\left(  s\right)  $ & $0$ & $0$ & $6$ & $1$ & $0$ & $1$ & $6$\\
$S_{21}\left(  s\right)  $ & $0$ & $1$ & $1$ & $5$ & $4$ & $3$ & $0$\\
$S_{22}\left(  s\right)  $ & $0$ & $0$ & $6$ & $4$ & $1$ & $4$ & $6$\\
$S_{23}\left(  s\right)  $ & $0$ & $1$ & $2$ & $3$ & $4$ & $5$ & $6$\\
$S_{24}\left(  s\right)  $ & $0$ & $0$ & $0$ & $0$ & $0$ & $0$ & $0$\\
$S_{25}\left(  s\right)  $ & $0$ & $1$ & $2$ & $3$ & $4$ & $5$ & $6$%
\end{tabular}
\]
\newline The pattern repeats with period $24,$ that is $S_{n}\left(  s\right)
=S_{n+24}\left(  s\right)  $ for all $n$ and for all $s.$ For values of $s$
which are spread numbers, however, $S_{n}\left(  s\right)  =S_{n+8}\left(
s\right)  $ for all $n,$ while for the non-spread numbers $S_{n}\left(
s\right)  =S_{n+6}\left(  s\right)  .%
\hspace{.1in}\diamond
$
\end{example}

\subsection{Spread polynomials evaluated at $s=1/3$}

Consider the spread $s=1/3$ formed by the long side $l_{0}$ and the diagonal
$l_{1}$ of an $A4$ sheet of paper, as discussed previously. The numbers
$S_{n}\left(  1/3\right)  =s\left(  l_{0},l_{n}\right)  $ are the spreads
formed by successive reflections, or rotations, of these two lines.

The recursive formula (\ref{Recursive}) allows us to calculate the following
list of prime power factorizations:%
\[%
\begin{tabular}
[c]{ll}%
$S_{1}\left(  1/3\right)  =3^{-1}$ & $S_{2}\left(  1/3\right)  =\allowbreak
2^{3}3^{-2}$\\
$S_{3}\left(  1/3\right)  =\allowbreak5^{2}3^{-3}$ & $S_{4}\left(  1/3\right)
=\allowbreak\allowbreak2^{5}3^{-4}$\\
$S_{5}\left(  1/3\right)  =\allowbreak3^{-5}$ & $S_{6}\left(  1/3\right)
=\allowbreak\allowbreak2^{3}5^{2}3^{-6}$\\
$S_{7}\left(  1/3\right)  =\allowbreak\allowbreak43^{2}3^{-7}$ & $S_{8}\left(
1/3\right)  =\allowbreak\allowbreak2^{7}7^{2}3^{-8}$\\
$S_{9}\left(  1/3\right)  =\allowbreak\allowbreak5^{2}19^{2}3^{-9}$ &
$S_{10}\left(  1/3\right)  =\allowbreak\allowbreak2^{3}11^{2}3^{-10}$\\
$S_{11}\left(  1/3\right)  =\allowbreak\allowbreak197^{2}3^{-11}$ &
$S_{12}\left(  1/3\right)  =\allowbreak\allowbreak2^{5}5^{2}23^{2}3^{-12}$\\
$S_{13}\left(  1/3\right)  =\allowbreak\allowbreak1249^{2}3^{-13}$ &
$S_{14}\left(  1/3\right)  =\allowbreak\allowbreak2^{3}13^{2}43^{2}3^{-14}$\\
$S_{15}\left(  1/3\right)  =\allowbreak\allowbreak5^{4}29^{2}3^{-15}$ &
$S_{16}\left(  1/3\right)  =\allowbreak\allowbreak2^{9}7^{2}17^{2}3^{-16}$\\
$S_{17}\left(  1/3\right)  =\allowbreak\allowbreak9791^{2}3^{-17}$ &
$S_{18}\left(  1/3\right)  =\allowbreak\allowbreak2^{3}5^{2}19^{2}%
73^{2}3^{-18}$\\
$S_{19}\left(  1/3\right)  =\allowbreak\allowbreak26107^{2}3^{-19}$ &
$S_{20}\left(  1/3\right)  =\allowbreak\allowbreak2^{5}11^{2}241^{2}3^{-20}$\\
$S_{21}\left(  1/3\right)  =\allowbreak\allowbreak5^{2}43^{2}167^{2}3^{-21}$ &
$S_{22}\left(  1/3\right)  =\allowbreak\allowbreak2^{3}197^{2}263^{2}3^{-22}%
$\\
$S_{23}\left(  1/3\right)  =\allowbreak\allowbreak139^{2}2207^{2}3^{-23}$ &
$S_{24}\left(  1/3\right)  =\allowbreak\allowbreak2^{7}5^{2}7^{2}23^{2}%
47^{2}3^{-24}$\\
$S_{25}\left(  1/3\right)  =\allowbreak\allowbreak149^{2}1949^{2}3^{-25}$ &
$S_{26}\left(  1/3\right)  =\allowbreak\allowbreak2^{3}131^{2}1249^{2}3^{-26}%
$\\
$S_{27}\left(  1/3\right)  =\allowbreak\allowbreak5^{2}19^{2}53^{2}%
433^{2}3^{-27}$ & $S_{28}\left(  1/3\right)  =\allowbreak\allowbreak
2^{5}13^{2}43^{2}1511^{2}3^{-28}$\\
$S_{29}\left(  1/3\right)  =\allowbreak\allowbreak6973919^{2}3^{-29}$ &
$S_{30}\left(  1/3\right)  =\allowbreak\allowbreak2^{3}5^{4}11^{2}%
29^{2}239^{2}3^{-30}\allowbreak$%
\end{tabular}
\]

Let's make some empirical observations about the above table. For each $n,$
$S_{n}\left(  1/3\right)  $ is a fraction whose denominator is $3^{n}.$ The
numerator is divisible by $2$ precisely when $n$ is even, and the power of $2
$ appearing is odd. The other factor of the numerator is a square. Prime
factors of the numerator seem to occur periodically. For an odd prime
$p\neq3,$ define $m\left(  p\right)  $ to be the smallest natural number $m$
such that for all positive multiples $n$ of $m,$ $S_{n}\left(  1/3\right)  $
has a factor of $p,$ if such an $m$ exists. From the table, we may guess that
this number for small primes $p$ is:%
\[%
\begin{tabular}
[c]{llll}%
$m\left(  5\right)  =3$ & $m\left(  7\right)  =8$ & $m\left(  11\right)  =10$
& $m\left(  13\right)  =14$\\
$m\left(  17\right)  =16$ & $m\left(  19\right)  =9$ & $m\left(  23\right)
=12$ & $m\left(  29\right)  =15.$%
\end{tabular}
\]
However larger primes also appear in the table, for example perhaps%
\[
m\left(  6973919\right)  =29.
\]
The aim of this paper is to try to begin to explain these numbers, and to show
that the phenomenon is not dependent on the initial spread $s=1/3$.

We adopt the convention that a rational number $\alpha$ is \textbf{divisible
by a prime }$p$ precisely when $\alpha=p\beta$ with $\beta$ a rational number
which can be expressed as $\beta=c/d$ with $c$ and $d$ integers, and $d$ not
divisible by $p.$ In this case we say $p$ is a \textbf{factor} of $\alpha.$
The following is the main result of this paper.

\medskip

\textbf{Theorem (Spread Periodicity) }\textit{For any rational number
}$s\equiv a/b$\textit{\ and any prime }$p$\textit{\ not dividing }%
$b,$\textit{\ there is a natural number }$m$\textit{\ such that }$S_{n}\left(
s\right)  $\textit{\ is divisible by }$p$\textit{\ precisely when }%
$m$\textit{\ divides }$n.$\textit{\ This number }$m$\textit{\ is a divisor of
either }$p-1$\textit{\ or }$p+1.$

\medskip

\textbf{Corollary }\textit{For any rational number }$s\equiv a/b$\textit{, any
prime }$p$\textit{\ not dividing }$b$ \textit{occurs infinitely often}
\textit{as a factor of the numbers }$S_{n}\left(  s\right)  $ for\textit{\ }%
$n=1,2,3,\cdots.$

\medskip

In addition, if we find a prime $p$ appearing as a factor of $S_{k}\left(
s\right)  $, then we can be sure that $p$ will appear as a factor of any
spread $S_{nk}\left(  s\right)  $, for $n=1,2,3,\cdots.$ For example, from the
above observations it follows that $S_{58}\left(  1/3\right)  $ is divisible
by $6973919.$ Note however that we are not able to address the more difficult
problem of determining $m\left(  p\right)  $ for a given $p.$

To prove the Spread periodicity theorem, we will explore the metrical geometry
of the \textit{one-dimensional projective line,} and show that finite
geometries, both \textit{Euclidean and non-Euclidean}, play a role.

\section{Geometry of the projective line}

The metrical structure of one-dimensional geometry over a general field
$\mathbb{F},$ not of characteristic two, was investigated recently in
\cite{WildAffine}. There are two distinctly different contexts:
\textit{affine} and \textit{projective}, and in this paper it is the
projective setting that is of primary interest, because we are interested in
rotations and reflections of one-dimensional subspaces of a two-dimensional
space $\mathbb{F}^{2}$, which essentially takes place in the one-dimensional
projective line.

A vector in $\mathbb{F}^{2}$ will typically be denoted $U$ or $V.$ If $U$ is a
non-zero vector, then $u=\left[  U\right]  $ represents the corresponding
\textbf{projective point}, or \textbf{p-point} for short, namely the line $OU
$ through $U$ and the origin $O\equiv\left[  0,0\right]  $. If $U\equiv\left[
x,y\right]  $ then we write $u=\left[  U\right]  \equiv\left[  x:y\right]  $,
with the usual convention for proportions that $\left[  x_{1}:y_{1}\right]
=\left[  x_{2}:y_{2}\right]  $ precisely when $x_{1}y_{2}-x_{2}y_{1}=0.$ The
projective points constitute the\textbf{\ projective line} $\mathbb{P}^{1}$.

Now fix a symmetric bilinear form on $\mathbb{F}^{2},$%
\begin{equation}
\left[  x_{1},y_{1}\right]  \cdot\left[  x_{2},y_{2}\right]  \equiv
ax_{1}x_{2}+b\left(  x_{1}y_{2}+x_{2}y_{1}\right)  +cy_{1}y_{2}
\label{Bilinear}%
\end{equation}
which is \textbf{non-degenerate}, that is
\[
ac-b^{2}\neq0.
\]
The projective point $u\equiv\left[  U\right]  $ is then \textbf{null}
precisely when $U\cdot U=0.$ It is important to realize that such a bilinear
form on $\mathbb{F}^{2}$ can naturally provide a \textit{metrical structure on
the associated projective line} $\mathbb{P}^{1}$.

The \textbf{projective quadrance}, or \textbf{p-quadrance} for short, between
the non-null p-points $u_{1}\equiv\left[  U_{1}\right]  $ and $u_{2}%
\equiv\left[  U_{2}\right]  $ is the number
\[
q\left(  u_{1},u_{2}\right)  \equiv1-\frac{\left(  U_{1}\cdot U_{2}\right)
^{2}}{\left(  U_{1}\cdot U_{1}\right)  \left(  U_{2}\cdot U_{2}\right)  }.
\]
This is well-defined, and if $u_{1}\equiv\left[  x_{1}:y_{1}\right]  $ and
$u_{2}\equiv\left[  x_{2}:y_{2}\right]  $, then a generalization of the
well-known \textbf{Fibonacci's identity }%
\[
\left(  x_{1}x_{2}+y_{1}y_{2}\right)  ^{2}+\left(  x_{1}y_{2}-x_{2}%
y_{1}\right)  ^{2}=\allowbreak\left(  x_{1}^{2}+y_{1}^{2}\right)  \left(
x_{2}^{2}+y_{2}^{2}\right)
\]
gives
\begin{align*}
q\left(  u_{1},u_{2}\right)   &  =1-\frac{\left(  ax_{1}x_{2}+b\left(
x_{1}y_{2}+x_{2}y_{1}\right)  +cy_{1}y_{2}\right)  ^{2}}{\left(  ax_{1}%
^{2}+2bx_{1}y_{1}+cy_{1}^{2}\right)  \left(  ax_{2}^{2}+2bx_{2}y_{2}%
+cy_{2}^{2}\right)  }\\
&  =\frac{\left(  ac-b^{2}\right)  \left(  x_{1}y_{2}-x_{2}y_{1}\right)  ^{2}%
}{\left(  ax_{1}^{2}+2bx_{1}y_{1}+cy_{1}^{2}\right)  \left(  ax_{2}%
^{2}+2bx_{2}y_{2}+cy_{2}^{2}\right)  }.
\end{align*}
This shows that $q\left(  u_{1},u_{2}\right)  =0$ precisely when $u_{1}%
=u_{2}.$ The pair $\left(  \mathbb{P}^{1},q\right)  $ denotes the projective
line together with the projective quadrance $q.$

The following is the fundamental formula for projective trigonometry in such a
one-dimensional setting, with a proof similar to the one in \cite{WildProj}.
In planar rational trigonometry this law is called the \textit{Triple spread
formula}, and is the analog of the fact that the sum of the angles in a
triangle in the real Cartesian plane is $\pi.$

\begin{theorem}
[Projective triple quad formula]If $u\equiv\left[  U\right]  ,$ $v\equiv
\left[  V\right]  $ and $w\equiv\left[  W\right]  $ are non-null p-points,
then the p-quadrances $q_{w}\equiv q\left(  u,v\right)  $, $q_{u}\equiv
q\left(  v,w\right)  $ and $q_{v}\equiv q\left(  u,w\right)  $ satisfy%
\[
\left(  q_{u}+q_{v}+q_{w}\right)  ^{2}=2\left(  q_{u}^{2}+q_{v}^{2}+q_{w}%
^{2}\right)  +4q_{u}q_{v}q_{w}.
\]

\end{theorem}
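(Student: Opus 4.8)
The plan is to reduce the identity to a direct computation using the explicit formula for the p-quadrance derived just above the statement. Writing $u_1 \equiv [x_1:y_1]$, $u_2 \equiv [x_2:y_2]$, $u_3 \equiv [x_3:y_3]$, and abbreviating $D_i \equiv ax_i^2 + 2bx_iy_i + cy_i^2$ (which is nonzero since the points are non-null) and $\Delta_{ij} \equiv x_iy_j - x_jy_i$, the computed formula gives
\[
q_w = \frac{(ac-b^2)\,\Delta_{12}^2}{D_1 D_2}, \qquad q_u = \frac{(ac-b^2)\,\Delta_{23}^2}{D_2 D_3}, \qquad q_v = \frac{(ac-b^2)\,\Delta_{13}^2}{D_1 D_3}.
\]
Clearing denominators by multiplying the target identity through by $(D_1 D_2 D_3)^2$, everything becomes a polynomial identity in the eight quantities $a,b,c,x_i,y_i$; so in principle one could just expand both sides and check equality. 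But that is the brute-force route, and the cleaner approach is to exploit a change of coordinates.

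First I would observe that both sides of the asserted equation are invariant under rescaling each $U_i$, so the claim depends only on the p-points, and it is invariant under any invertible linear change of coordinates on $\mathbb{F}^2$ that also transforms the bilinear form: the p-quadrances $q(u_i,u_j)$ are unchanged by such a simultaneous transformation. Since the form is non-degenerate and the characteristic is not two, I would diagonalize it, reducing to the case $b = 0$, i.e. $U_1 \cdot U_2 = ax_1x_2 + cy_1y_2$ with $ac \neq 0$. Rescaling coordinates further (absorbing $a$ and $c$, working in a quadratic extension if necessary — but the identity is polynomial so it suffices to prove it over the algebraic closure and then it holds over $\mathbb{F}$) one may even take the form to be $U_1\cdot U_2 = x_1x_2 + y_1y_2$, the standard Euclidean dot product. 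In that normalization $q(u_i,u_j)$ is exactly the classical spread between the lines $[x_i:y_i]$ and $[x_j:y_j]$, and the claimed identity is precisely the Triple spread formula of planar rational trigonometry, which is proved in \cite{Wild}.

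Alternatively, and perhaps more self-containedly, I would parametrize: each non-null p-point $u_i$ can be written (over the algebraic closure) with $D_i$ normalized to $1$, and then $q(u_i,u_j) = (ac-b^2)\Delta_{ij}^2$. Setting $q_w = (ac-b^2)\Delta_{12}^2$ etc., the identity to prove reduces to
\[
\bigl(\Delta_{12}^2 + \Delta_{23}^2 + \Delta_{13}^2\bigr)^2 = 2\bigl(\Delta_{12}^4 + \Delta_{23}^4 + \Delta_{13}^4\bigr) + 4(ac-b^2)\,\Delta_{12}^2\Delta_{23}^2\Delta_{13}^2,
\]
after dividing by $(ac-b^2)^2$; wait — one must keep track: with the normalization $D_i = 1$ the factor $(ac-b^2)$ reappears only on the cubic term, and the key algebraic input is the Plücker-type relation $\Delta_{12}\Delta_{13} - \Delta_{12}\Delta_{23} + \ldots$ — more precisely the identity $\Delta_{23}\,x_1 - \Delta_{13}\,x_2 + \Delta_{12}\,x_3 = 0$ and its $y$-analogue, which expresses linear dependence of three vectors in $\mathbb{F}^2$. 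Contracting these two linear relations against the bilinear form and using $D_i=1$ yields a quadratic relation among $\Delta_{12},\Delta_{13},\Delta_{23}$ with coefficient $(ac-b^2)$, and squaring/manipulating that relation gives exactly the displayed quartic identity.

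The main obstacle I anticipate is bookkeeping rather than conceptual: making the normalization $D_i = 1$ rigorous requires either passing to the algebraic closure (harmless, since the final statement is a polynomial identity) or carrying the $D_i$'s through the whole computation, and one must be careful that the Plücker relation $\sum \pm \Delta_{jk} U_i = 0$ is contracted correctly against $U_\ell \cdot (-)$ to produce the needed intermediate identity $(ac-b^2)\Delta_{jk}^2 = D_jD_k - (U_j\cdot U_k)^2$ in a way that feeds cleanly into the triple-quad relation. I expect no genuine difficulty beyond organizing this linear-algebra reduction so that it mirrors the Triple spread formula proof in \cite{Wild} or the projective version in \cite{WildProj}.
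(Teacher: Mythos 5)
Your proposal is correct in substance but circles among three routes without landing cleanly on the mechanism the paper uses, and one intermediate claim is stated imprecisely. The paper's own proof is coordinate-free and very short: writing $a_{U}\equiv U\cdot U$ and $b_{UV}\equiv U\cdot V$, it expresses $q_{w}=\left(a_{U}a_{V}-b_{UV}^{2}\right)/\left(a_{U}a_{V}\right)$ and its two companions, and then records a single polynomial identity exhibiting $\left(q_{u}+q_{v}+q_{w}\right)^{2}-2\left(q_{u}^{2}+q_{v}^{2}+q_{w}^{2}\right)-4q_{u}q_{v}q_{w}$ as a product of two factors times $a_{U}^{-2}a_{V}^{-2}a_{W}^{-2}$, one factor being the Gram determinant $\det\left(U_{i}\cdot U_{j}\right)$, which vanishes because three vectors in $\mathbb{F}^{2}$ are linearly dependent. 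No diagonalization, no passage to the algebraic closure, and no coordinates $x_{i},y_{i}$ are needed. Your third route is really the same idea unpacked: the relation $\Delta_{23}U_{1}-\Delta_{13}U_{2}+\Delta_{12}U_{3}=0$ is precisely why that Gram determinant vanishes. But the step asserting that contracting it against the form ``yields a quadratic relation among $\Delta_{12},\Delta_{13},\Delta_{23}$'' cannot be right as stated: the three $\Delta_{ij}$ are algebraically independent functions of the six coordinates (fix $U_{1}=(1,0)$, $U_{2}=(s,\Delta_{12})$, $U_{3}=(t,\Delta_{13})$ and note $\Delta_{23}=s\Delta_{13}-t\Delta_{12}$ is then unconstrained), so any genuine relation must also involve the inner products $U_{i}\cdot U_{j}$. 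The repair is exactly the Gram relation $a_{U}a_{V}a_{W}+2b_{UV}b_{UW}b_{VW}-a_{U}b_{VW}^{2}-a_{V}b_{UW}^{2}-a_{W}b_{UV}^{2}=0$; squaring it to eliminate the sign ambiguity in the $b_{ij}$ and substituting $b_{UV}^{2}=a_{U}a_{V}\left(1-q_{w}\right)$ (your Fibonacci-type identity) gives the theorem in a few lines. Your second route --- diagonalize the form over the algebraic closure, reduce to the Euclidean dot product, and quote the Triple spread formula of \cite{Wild} --- is legitimate and genuinely different, and the polynomial-identity argument for descending back to $\mathbb{F}$ is sound, but it outsources the core identity to a citation where the paper's argument is self-contained.
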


\begin{proof}
If we write $a_{U}\equiv U\cdot U$ and $b_{UV}\equiv U\cdot V$ then
\[
q_{u}=\frac{a_{V}a_{W}-b_{VW}^{2}}{a_{V}a_{W}}\qquad q_{v}=\frac{a_{U}%
a_{W}-b_{UW}^{2}}{a_{U}a_{W}}\qquad\mathrm{and}\qquad q_{w}=\frac{a_{U}%
a_{V}-b_{UV}^{2}}{a_{U}a_{V}}.
\]
The following is an algebraic identity in the abstract variables $a_{U}%
,a_{V},a_{W},$ $b_{UV},b_{UW}$ and $b_{VW}$:$\allowbreak$%
\begin{align*}
&  \left(  q_{u}+q_{v}+q_{w}\right)  ^{2}-2\left(  q_{u}^{2}+q_{v}^{2}%
+q_{w}^{2}\right)  -4q_{u}q_{v}q_{w}\\
&  =\allowbreak\left(  a_{U}a_{V}a_{W}+2b_{UV}b_{UW}b_{VW}-a_{U}b_{VW}%
^{2}-a_{V}b_{UW}^{2}-a_{W}b_{UV}^{2}\right) \\
&  \times\left(  2b_{UV}b_{UW}b_{VW}-a_{U}a_{V}a_{W}+a_{U}b_{VW}^{2}%
+a_{V}b_{UW}^{2}+a_{W}b_{UV}^{2}\right)  \allowbreak\allowbreak a_{W}%
^{-2}a_{V}^{-2}a_{U}^{-2}.
\end{align*}
But the first factor on the right hand side is the determinant%
\[%
\begin{vmatrix}
a_{U} & b_{UV} & b_{UW}\\
b_{UV} & a_{V} & b_{VW}\\
b_{UW} & b_{VW} & a_{W}%
\end{vmatrix}
=%
\begin{vmatrix}
U\cdot U & U\cdot V & U\cdot W\\
U\cdot V & V\cdot V & V\cdot W\\
U\cdot W & V\cdot W & W\cdot W
\end{vmatrix}
\]
which is zero since $U,V$ and $W$ are coplanar.$%
{\hspace{.1in} \rule{0.5em}{0.5em}}%
$
\end{proof}

\begin{example}
For an integer $k,$ the symmetric bilinear form
\[
\left(  x_{1},y_{1}\right)  \cdot\left(  x_{2},y_{2}\right)  \equiv x_{1}%
x_{2}+ky_{1}y_{2}%
\]
is non-degenerate provided that the characteristic of\textrm{\ }$\mathbb{F}$
does not divide $k,$ which we henceforth assume. In particular we assume that
$k$ is non-zero. If $u_{1}\equiv\left[  x_{1}:y_{1}\right]  $ and $u_{2}%
\equiv\left[  x_{2}:y_{2}\right]  $ then the associated p-quadrance $q_{k}$
is
\begin{align*}
q_{k}\left(  u_{1},u_{2}\right)   &  \equiv1-\frac{\left(  x_{1}x_{2}%
+ky_{1}y_{2}\right)  ^{2}}{\left(  x_{1}^{2}+ky_{1}^{2}\right)  \left(
x_{2}^{2}+ky_{2}^{2}\right)  }\\
&  =\frac{k\left(  x_{1}y_{2}-x_{2}y_{1}\right)  ^{2}}{\left(  x_{1}%
^{2}+ky_{1}^{2}\right)  \left(  x_{2}^{2}+ky_{2}^{2}\right)  }.%
\hspace{.1in}\diamond
\end{align*}

\end{example}

\section{Isometries of the projective line}

Let's now show how spread polynomials link to reflections and rotations in
one-dimensional geometry. Suppose $q$ is some fixed choice of p-quadrance on
$\mathbb{P}^{1}.$ An\textbf{\ isometry} of $\left(  \mathbb{P}^{1},q\right)  $
is a map $\tau:u\rightarrow u\tau$ that inputs and outputs non-null p-points,
and satisfies for any non-null p-points $u$ and $v$%
\[
q\left(  u,v\right)  =q\left(  u\tau,v\tau\right)  .
\]

For a $2\times2$ matrix
\[
T=%
\begin{pmatrix}
a & b\\
c & d
\end{pmatrix}
\]
representing the following linear transformation on $\mathbb{F}^{2}:$
\[
\left[  x,y\right]  \rightarrow\left[  x,y\right]
\begin{pmatrix}
a & b\\
c & d
\end{pmatrix}
=\left[  ax+cy,bx+dy\right]
\]
define the corresponding \textbf{projective transformation} $\tau$%
\[
\left[  x:y\right]  \tau=\left[  ax+cy:bx+dy\right]
\]
and denote it by%
\[
\tau\equiv%
\begin{bmatrix}
a & b\\
c & d
\end{bmatrix}
.
\]
The matrix for such a projective transformation is determined only up to a
scalar, so that%
\[%
\begin{bmatrix}
a & b\\
c & d
\end{bmatrix}
=%
\begin{bmatrix}
\lambda a & \lambda b\\
\lambda c & \lambda d
\end{bmatrix}
\]
for any non-zero number $\lambda$. We adapt the following from \cite{WildProj}.

\begin{theorem}
[Isometries of the projective line]An isometry of the projective space
$\left(  \mathbb{P}^{1},q_{k}\right)  $ must be a projective transformation of
one of the following types:%
\[%
\begin{tabular}
[c]{lllll}%
$\rho_{\left[  a:b\right]  }\equiv%
\begin{bmatrix}
a & b\\
-kb & a
\end{bmatrix}
$ &  & \textrm{or} &  & $\sigma_{\left[  a:b\right]  }\equiv%
\begin{bmatrix}
a & b\\
kb & -a
\end{bmatrix}
$%
\end{tabular}
\]
for some non-null projective point $\left[  a:b\right]  $.
\end{theorem}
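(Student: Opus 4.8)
The plan is to characterize isometries of $(\mathbb{P}^1, q_k)$ by first establishing that any isometry is necessarily a projective transformation, and then determining which $2\times 2$ matrices (up to scalar) preserve the p-quadrance. For the first part, I would argue that an isometry is determined by its action on three generic non-null p-points together with the rigidity coming from the projective triple quad formula: given the images of two non-null p-points $u_1, u_2$, the image of any third non-null p-point $w$ must satisfy $q(u_1\tau, w\tau) = q(u_1,w)$ and $q(u_2\tau,w\tau) = q(u_2,w)$, and generically these two quadrance conditions pin down $w\tau$ to finitely many candidates; a continuity/algebraicity argument (or an explicit count using the quadratic nature of $q_k$) then forces $\tau$ to agree with a projective transformation. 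Since $\mathbb{P}^1$ over $\mathbb{F}$ has a natural triply-transitive action of $\mathrm{PGL}_2$, and isometries form a subgroup, it suffices to pin down the stabilizer structure; but the cleanest route is just to show directly that an isometry must be $\mathbb{F}$-linear-projective because it preserves the cross-ratio-like invariant encoded by $q_k$.

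The main computational step is the second part: writing a general projective transformation $\tau \equiv \begin{bmatrix} a & b \\ c & d\end{bmatrix}$ and imposing $q_k(u_1\tau, u_2\tau) = q_k(u_1,u_2)$ for all non-null $u_1, u_2$. Using the closed form
\[
q_k(u_1,u_2) = \frac{k(x_1 y_2 - x_2 y_1)^2}{(x_1^2 + k y_1^2)(x_2^2 + k y_2^2)}
\]
from the Example, and computing how the numerator and the two "norm" factors in the denominator transform under $T$, the isometry condition becomes a polynomial identity. The determinant $ad-bc$ scales the numerator $(x_1y_2 - x_2y_1)^2$ by $(ad-bc)^2$, while each factor $x^2 + ky^2$ transforms to $(ax+cy)^2 + k(bx+dy)^2 = (a^2 + kb^2)x^2 + 2(ac+kbd)xy + (c^2+kd^2)y^2$. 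For $q_k$ to be preserved for all inputs, this quadratic form must be proportional to $x^2 + ky^2$ — i.e. $ac + kbd = 0$ and $(a^2+kb^2)/1 = (c^2+kd^2)/k = \mu$ for some scalar $\mu$ — and moreover the induced scaling $(ad-bc)^2/\mu^2$ on $q_k$ must equal $1$. Solving $ac + kbd = 0$ together with $a^2 + kb^2 = \tfrac{1}{k}(c^2 + kd^2)$ yields, up to the projective scalar, exactly the two families: $(c,d) = (-kb, a)$ giving $\rho_{[a:b]}$, and $(c,d) = (kb, -a)$ giving $\sigma_{[a:b]}$; the condition $a^2 + kb^2 \neq 0$ is precisely the requirement that $[a:b]$ be non-null, which also guarantees $\det T \neq 0$ so $\tau$ is a genuine projective transformation.

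The hardest part will be the first step — justifying rigorously that every isometry, a priori only a set-theoretic bijection on non-null p-points, is automatically a projective transformation — rather than the linear algebra of the second step, which is routine once set up. I would handle this by the standard device: fix three non-null p-points in general position, compose the given isometry with a projective transformation (which exists by triple transitivity, after checking the relevant triples stay non-null) so that the composite fixes all three; then use the triple quad formula to show that fixing three points forces an isometry to fix every non-null p-point, hence the composite is the identity and the original isometry was projective. A mild subtlety to flag, especially over finite fields $\mathbb{F}_p$, is that "general position" and "enough non-null points" must be verified — there are sufficiently many non-null p-points by the spread-number count quoted earlier — and that the two types $\rho$ and $\sigma$ are genuinely distinct (one has determinant $a^2+kb^2$, the other $-(a^2+kb^2)$, of opposite sign as "rotations" versus "reflections"), which I would note at the end.
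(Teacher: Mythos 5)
Your second step is sound: requiring the form $x^{2}+ky^{2}$ to pull back under $T$ to a proportional form forces $ac+kbd=0$ and $c^{2}+kd^{2}=k\left(  a^{2}+kb^{2}\right)  $, hence $\left(  c,d\right)  =\left(  -kb,a\right)  $ or $\left(  kb,-a\right)  $, and the determinant check shows the induced scaling of $q_{k}$ is $1$; this is essentially the verification half of the paper's proof. The genuine gap is in your first step. You propose to normalize the given isometry by composing with an element of $\mathrm{PGL}_{2}$ supplied by triple transitivity so that the composite fixes three non-null p-points, and then to invoke the rigidity statement that an isometry fixing three points is the identity. But that normalizing element is not known to be an isometry (a generic element of $\mathrm{PGL}_{2}$ is not), so the composite is not known to be an isometry, and the rigidity lemma does not apply to it; producing an isometry realizing a prescribed quadrance-compatible triple of images is essentially the statement you are trying to prove, so the argument is circular. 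The fallback phrases ``continuity/algebraicity argument'' and ``preserves a cross-ratio-like invariant'' are not proofs over a general field of characteristic not two, in particular over $\mathbb{F}_{p}$.

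The paper shows that no separate ``every isometry is projective'' step is needed. Writing $i_{1}\tau=\left[  a:b\right]  $ and $i_{2}\tau=\left[  c:d\right]  $, preservation of $q_{k}\left(  i_{1},i_{2}\right)  $ forces $\left[  c:d\right]  =\left[  kb:-a\right]  $, and then for an arbitrary non-null $u=\left[  x:y\right]  $ with $u\tau=\left[  w:z\right]  $ the two conditions $q_{k}\left(  u,i_{1}\right)  =q_{k}\left(  u\tau,\left[  a:b\right]  \right)  $ and $q_{k}\left(  u,i_{2}\right)  =q_{k}\left(  u\tau,\left[  kb:-a\right]  \right)  $ give $x^{2}:y^{2}=\left(  aw+kbz\right)  ^{2}:\left(  az-bw\right)  ^{2}$, hence $\left[  x:y\right]  =\left[  w:z\right]  M$ for one of two explicit matrices $M$; inverting $M$ yields exactly $\rho_{\left[  a:b\right]  }$ or $\sigma_{\left[  a:b\right]  }$, with no prior assumption that $\tau$ is projective. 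If you want to keep your two-step structure, the repair is to normalize with the already-verified isometries from your second step (compose with $\rho_{\left[  a:b\right]  }^{-1}$ so the composite fixes $\left[  1:0\right]  $; it then automatically fixes $\left[  0:1\right]  $, the unique p-point at p-quadrance $1$ from $\left[  1:0\right]  $), not with an arbitrary element of $\mathrm{PGL}_{2}$. One further point, which the paper also leaves implicit: the sign in $x:y=\pm\left(  aw+kbz\right)  :\left(  az-bw\right)  $ could a priori depend on $u$, and a third reference point is needed to see that the choice is uniform.
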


\begin{proof}
Suppose that $\tau$ is an isometry of $\left(  \mathbb{P}^{1},q_{k}\right)  $
and that it sends $i_{1}\equiv\left[  1:0\right]  $ to $\left[  a:b\right]  $
and $i_{2}\equiv\left[  0:1\right]  $ to $\left[  c:d\right]  $. Since $i_{1}$
and $i_{2}$ are non-null p-points, $\left[  a:b\right]  $ and $\left[
c:d\right]  $ must also be non-null. Since $q_{k}\left(  i_{1},i_{2}\right)
=0,$ we must have
\[
ac+kbd=0.
\]
So $\left[  c:d\right]  =\left[  kb:-a\right]  .$ Now given an arbitrary
non-null projective point $u\equiv\left[  x:y\right]  $ with $u\tau
=v\equiv\left[  w:z\right]  $,%
\[
q_{k}\left(  u,i_{1}\right)  =\frac{ky^{2}}{x^{2}+ky^{2}}=q_{k}\left(
v,\left[  a:b\right]  \right)  =\frac{k\left(  az-bw\right)  ^{2}}{\left(
a^{2}+kb^{2}\right)  \left(  w^{2}+kz^{2}\right)  }%
\]
and%
\[
q_{k}\left(  u,i_{2}\right)  =\frac{x^{2}}{x^{2}+ky^{2}}=q_{k}\left(
v,\left[  kb:-a\right]  \right)  =\frac{\left(  aw+kbz\right)  ^{2}}{\left(
a^{2}+kb^{2}\right)  \left(  w^{2}+kz^{2}\right)  }.
\]
Comparing these two equations gives%
\[
x^{2}:y^{2}=\left(  aw+kbz\right)  ^{2}:\left(  az-bw\right)  ^{2}%
\]
so that either
\[
x:y=\left(  aw+kbz\right)  :\left(  az-bw\right)  \qquad\mathrm{or}\qquad
x:y=-\left(  aw+kbz\right)  :\left(  az-bw\right)  .
\]
Using matrix notation, either%
\[
\left[  x:y\right]  =\left[  w:z\right]
\begin{bmatrix}
a & -b\\
kb & a
\end{bmatrix}
\qquad\mathrm{or}\qquad\left[  x:y\right]  =\left[  w:z\right]
\begin{bmatrix}
-a & -b\\
-kb & a
\end{bmatrix}
.
\]
Inverting gives either%
\[
\tau=%
\begin{bmatrix}
a & b\\
-kb & a
\end{bmatrix}
\qquad\mathrm{or}\qquad\tau=%
\begin{bmatrix}
a & b\\
kb & -a
\end{bmatrix}
.
\]
If $\left[  a:b\right]  $ is non-null, then both of these are isometries, due
to the equations
\[%
\begin{bmatrix}
a & b\\
-kb & a
\end{bmatrix}%
\begin{bmatrix}
1 & 0\\
0 & k
\end{bmatrix}%
\begin{bmatrix}
a & b\\
-kb & a
\end{bmatrix}
^{T}=\allowbreak%
\begin{bmatrix}
a^{2}+b^{2}k & 0\\
0 & a^{2}k+b^{2}k^{2}%
\end{bmatrix}
=%
\begin{bmatrix}
1 & 0\\
0 & k
\end{bmatrix}
\]
and%
\[%
\begin{bmatrix}
a & b\\
kb & -a
\end{bmatrix}%
\begin{bmatrix}
1 & 0\\
0 & k
\end{bmatrix}%
\begin{bmatrix}
a & b\\
kb & -a
\end{bmatrix}
^{T}=\allowbreak%
\begin{bmatrix}
a^{2}+b^{2}k & 0\\
0 & a^{2}k+b^{2}k^{2}%
\end{bmatrix}
=%
\begin{bmatrix}
1 & 0\\
0 & k
\end{bmatrix}
.%
{\hspace{.1in} \rule{0.5em}{0.5em}}%
\]

\end{proof}

We call
\[%
\begin{tabular}
[c]{lllll}%
$\rho_{\left[  a:b\right]  }\equiv%
\begin{bmatrix}
a & b\\
-kb & a
\end{bmatrix}
$ &  & \textrm{and} &  & $\sigma_{\left[  a:b\right]  }\equiv%
\begin{bmatrix}
a & b\\
kb & -a
\end{bmatrix}
$%
\end{tabular}
\]
respectively a \textbf{projective rotation} and a \textbf{projective
reflection}. These realizations as projective matrices extend isometries also
to null points.

The \textbf{identity transformation}
\[
\rho_{\left[  1:0\right]  }\equiv%
\begin{bmatrix}
1 & 0\\
0 & 1
\end{bmatrix}
\]
is a projective rotation.

Our convention for compositions is $u\left(  \tau_{1}\tau_{2}\right)
\equiv\left(  u\tau_{1}\right)  \tau_{2}.$

\begin{theorem}
[Composition of isometries]For any non-null p-points $\left[  a:b\right]  $
and $\left[  c:d\right]  $%
\[%
\begin{tabular}
[c]{lll}%
$\sigma_{\left[  a:b\right]  }\sigma_{\left[  c:d\right]  }=\rho_{\left[
ac+kbd:ad-bc\right]  }$ &  & $\rho_{\left[  a:b\right]  }\rho_{\left[
c:d\right]  }=\rho_{\left[  ac-kbd:ad+bc\right]  }$\\
$\rho_{\left[  a:b\right]  }\sigma_{\left[  c:d\right]  }=\sigma_{\left[
ac+kbd:ad-bc\right]  }$ &  & $\sigma_{\left[  a:b\right]  }\rho_{\left[
c:d\right]  }=\sigma_{\left[  ac-kbd:ad+bc\right]  .}$%
\end{tabular}
\]

\end{theorem}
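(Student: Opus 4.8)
The plan is to verify all four composition formulas by direct matrix multiplication, reading off from each product matrix which of the two families (rotation or reflection) it belongs to. Recall that, with the convention $u(\tau_1\tau_2)\equiv(u\tau_1)\tau_2$ and a p-point acted on by a matrix on the right, the composite $\tau_1\tau_2$ is represented by the ordinary matrix product of the representing matrices, with the matrix of $\tau_1$ on the left. So the whole argument reduces to multiplying the explicit matrices
\begin{equation*}
\rho_{[a:b]}\equiv\begin{bmatrix} a & b\\ -kb & a\end{bmatrix},\qquad \sigma_{[a:b]}\equiv\begin{bmatrix} a & b\\ kb & -a\end{bmatrix}
\end{equation*}
in the four pairings and observing that each answer has the shape of one of these templates: a matrix is $\rho$-type when its lower-left entry is $-k$ times its upper-right entry and its lower-right entry equals its upper-left entry, and $\sigma$-type when the lower-left entry is $+k$ times the upper-right and the lower-right is the negative of the upper-left.

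First I would do the rotation--rotation case as a model. Multiplying gives
\begin{equation*}
\begin{bmatrix} a & b\\ -kb & a\end{bmatrix}\begin{bmatrix} c & d\\ -kd & c\end{bmatrix}=\begin{bmatrix} ac-kbd & ad+bc\\ -k(ad+bc) & ac-kbd\end{bmatrix},
\end{equation*}
which by the template is precisely $\rho_{[ac-kbd:ad+bc]}$. The remaining three products are the same two-line computation; in each case the output matrix is visibly $\rho$-type (for $\sigma_{[a:b]}\sigma_{[c:d]}$) or $\sigma$-type (for $\rho_{[a:b]}\sigma_{[c:d]}$ and $\sigma_{[a:b]}\rho_{[c:d]}$), with the parameters exactly as stated. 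A useful consistency check is that composing two reflections or two rotations yields a rotation while mixing a rotation and a reflection yields a reflection, matching the expectation that reflections are the \emph{orientation-reversing} isometries.

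Second, I would confirm that the parameter p-points on the right-hand sides are legitimate, i.e. not the zero vector, so that the symbols $\rho_{[\cdot]}$, $\sigma_{[\cdot]}$ make sense and are themselves non-null. Here the one non-routine ingredient enters: the Brahmagupta--Fibonacci-type identity
\begin{equation*}
(ac-kbd)^2+k(ad+bc)^2=(a^2+kb^2)(c^2+kd^2)=(ac+kbd)^2+k(ad-bc)^2,
\end{equation*}
valid over any field. Since $[a:b]$ and $[c:d]$ are non-null for the form $(x,y)\cdot(x,y)=x^2+ky^2$, the right-hand product is nonzero, hence none of the vectors $(ac\mp kbd,\,ad\pm bc)$ can vanish, and moreover the outputs are again non-null p-points. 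This is consistent with the fact that a composite of isometries is an isometry and so, by the Isometries of the projective line theorem, must be of $\rho$- or $\sigma$-type in the first place.

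I do not expect a genuine obstacle: the statement is a routine verification. The two points needing care are (i) respecting the left-to-right ordering forced by the right-action convention on row vectors, so that for instance $\rho_{[a:b]}\sigma_{[c:d]}$ and $\sigma_{[a:b]}\rho_{[c:d]}$ come out with the asymmetric parameters $ac+kbd:ad-bc$ and $ac-kbd:ad+bc$ respectively, and (ii) recording the non-vanishing of the parameter vector via the identity above. An alternative, essentially equivalent, route is to identify $\rho_{[a:b]}$ with the ``number'' $a+b\omega$ where $\omega^2=-k$, and $\sigma_{[a:b]}$ with the map ``conjugate, then multiply by $a+b\omega$''; then the four formulas are just the multiplication rule $(a+b\omega)(c+d\omega)=(ac-kbd)+(ad+bc)\omega$ together with the compatibility of conjugation with multiplication. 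This is the multiplicative structure on non-null p-points of the projective line alluded to in the introduction, of which the matrix computation is simply the faithful $2\times2$ representation.
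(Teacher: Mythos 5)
Your proposal is correct and matches the paper's proof, which likewise treats the four formulas as a direct matrix-multiplication verification and invokes exactly the same Fibonacci-type identity $(ac+kbd)^{2}+k(ad-bc)^{2}=(a^{2}+kb^{2})(c^{2}+kd^{2})=(ac-kbd)^{2}+k(ad+bc)^{2}$ to confirm that the resulting p-points are non-null. You simply spell out the computation the paper leaves implicit; the ordering convention and the resulting parameters are all handled correctly.
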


\begin{proof}
This is a straightforward verification. Another generalization of the
Fibonacci identities,%
\[
\left(  ac+kbd\right)  ^{2}+k\left(  ad-bc\right)  ^{2}=\left(  a^{2}%
+kb^{2}\right)  \left(  c^{2}+kd^{2}\right)  =\left(  ac-kbd\right)
^{2}+k\left(  ad+bc\right)  ^{2}%
\]
$\allowbreak$shows that the resultant isometries are also associated to
non-null points.$%
{\hspace{.1in} \rule{0.5em}{0.5em}}%
$
\end{proof}

Define $G\equiv G\left(  q_{k}\right)  $ to be the group of isometries of
$q_{k}$, and distinguish the subgroup $G_{e}\equiv G_{e}\left(  q_{k}\right)
$ of projective rotations $\rho_{\left[  a:b\right]  }.$ These latter are
naturally in bijection with the non-null p-points. From the Composition of
isometries theorem, the subgroup $G_{e}$ is commutative.

The coset $G_{o}\equiv G_{o}\left(  q_{k}\right)  $ consists of projective
reflections $\sigma_{\left[  a:b\right]  },$ and these too are also naturally
in bijection with the non-null p-points. The group $G$ naturally acts on the
space of non-null p-points. This action is transitive since $\sigma_{\left[
a:b\right]  }$ and $\rho_{\left[  a:b\right]  }$ both send $\left[
1:0\right]  $ to $\left[  a:b\right]  $.

Since projective rotations are in bijection with non-null p-points, we can
\textit{transfer the multiplicative structure }%
\[
\rho_{\left[  a:b\right]  }\rho_{\left[  c:d\right]  }=\rho_{\left[
ac-kbd:ad+bc\right]  }%
\]
\textit{of projective rotations to non-null p-points}. So for $u\equiv\left[
a:b\right]  $ and $v\equiv\left[  c:d\right]  $ non-null p-points, define
their $k$\textbf{-product} by the rule:%

\[
uv=\left[  a:b\right]  \left[  c:d\right]  \equiv\left[  ac-kbd:ad+bc\right]
.
\]
When $k=1$ this multiplication is familiar from the two-dimensional setting of
complex numbers. Note however that we are here working in the one-dimensional
situation, over a general field, and allowing different values of $k.$ The
resulting group is commutative, has \textbf{identity}
\[
e\equiv\left[  1:0\right]
\]
and the\textbf{\ inverse} of $u\equiv\left[  a:b\right]  $ is $u^{-1}%
\equiv\left[  a:-b\right]  $. For a non-null projective point $u,$ we let
$u^{n}\equiv uu\cdots u$ ($n$ times) denote the $n$\textbf{-th} \textbf{power}
of $u.$ Of course this depends on the prior choice $q=q_{k}$ of p-quadrance.

\section{Spreads of p-points}

Working in $\left(  \mathbb{P}^{1},q_{k}\right)  $, define the $k$%
\textbf{-spread} $s_{k}\left(  u\right)  $ of the non-null p-point
$u\equiv\left[  a:b\right]  $ to be the number
\[
s_{k}\left(  u\right)  \equiv q_{k}\left(  e,u\right)  =\allowbreak
\frac{kb^{2}}{a^{2}+kb^{2}}.
\]
Then $s_{k}\left(  u\right)  =0$ precisely when $u=\left[  1:0\right]  =e$,
and $s_{k}\left(  u\right)  =1$ precisely when $u=\left[  0:1\right]  $. Since
multiplication by $v$ is the same as applying the projective rotation
$\rho_{v},$ multiplication is an isometry, so that for any non-null p-points
$v$ and $u,$
\[
q_{k}\left(  v,vu\right)  =s_{k}\left(  u\right)  =\frac{kb^{2}}{a^{2}+kb^{2}%
}.
\]

The next result connects spread polynomials and powers.

\begin{theorem}
[Spread of a power]In $\left(  \mathbb{P}^{1},q_{k}\right)  $, if $u$ is a
non-null projective point with $s_{k}\left(  u\right)  \equiv s$, then for any
natural number $n,$
\[
s_{k}\left(  u^{n}\right)  =S_{n}\left(  s\right)  .
\]

\end{theorem}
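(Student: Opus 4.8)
The plan is to prove the claim by induction on $n$, using the recursive definition (\ref{Recursive}) of the spread polynomials as the target and the $k$-product multiplication on non-null p-points as the engine. First I would set up coordinates: write $u\equiv\left[a:b\right]$, so that $s_k(u)=kb^2/(a^2+kb^2)$, and normalize so that $a^2+kb^2=1$ (this is harmless since the p-point is a projective class, but over a general field one must instead track the quantity $N\equiv a^2+kb^2$ as a common denominator; I will carry $N$ along rather than assume it equals $1$). By the transferred multiplicative structure, $u^n\equiv\left[a_n:b_n\right]$ where $a_{n}$ and $b_{n}$ are given by the recursion $a_{n}=a\,a_{n-1}-kb\,b_{n-1}$, $b_{n}=a\,b_{n-1}+b\,a_{n-1}$, coming from $\rho_{\left[a:b\right]}\rho_{\left[c:d\right]}=\rho_{\left[ac-kbd:ad+bc\right]}$; and by the Fibonacci-type identity quoted in the proof of the Composition of isometries theorem, $a_n^2+kb_n^2=(a^2+kb^2)^n=N^n$.

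Next I would compute $s_k(u^n)=kb_n^2/N^n$ and aim to show the sequence $t_n\equiv kb_n^2/N^n$ satisfies exactly the spread polynomial recursion $t_n=2(1-2s)t_{n-1}-t_{n-2}+2s$ with $t_0=0$, $t_1=s$. The base cases are immediate: $u^0=e=\left[1:0\right]$ gives $t_0=0$, and $u^1=u$ gives $t_1=s_k(u)=s$. For the inductive step the key is a clean relation among $b_n$, $b_{n-1}$, $b_{n-2}$. From $b_n=ab_{n-1}+ba_{n-1}$ and $b_{n-1}=ab_{n-2}+ba_{n-2}$, together with $a_{n-1}=aa_{n-2}-kbb_{n-2}$, one eliminates the $a$'s to get a second-order linear recursion $b_n=2a\,b_{n-1}-N\,b_{n-2}$ (this is the standard companion-matrix identity: $b_n$ satisfies the recursion whose characteristic polynomial is $x^2-2ax+N$, since the eigenvalues of the rotation matrix are $a\pm b\sqrt{-k}$). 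Squaring and dividing by $N^n$ should, after using $2a = $ (something expressible via $s$) and $a^2=N(1-s)$, i.e. $a^2/N=1-s$, convert the relation for $b_n^2$ into the asserted recursion for $t_n$; here one must be slightly careful because squaring $b_n=2ab_{n-1}-Nb_{n-2}$ introduces a cross term $b_{n-1}b_{n-2}$, so I expect to need an auxiliary identity (for instance tracking the mixed quantity $a_nb_{n-1}-a_{n-1}b_n$ or $b_nb_{n-2}$) alongside $t_n$ — a two-variable induction, or equivalently recognizing $t_n$ as a quadratic form in the vector $(a_n,b_n)$ and pushing the $2\times2$ rotation through.

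The main obstacle, then, is precisely this squaring step: going from the linear recursion for $b_n$ to a recursion for $b_n^2$ that closes up without reference to $a_nb_n$ cross terms. I anticipate two workable routes. One is the two-variable induction just mentioned. The cleaner alternative is to bypass the recursion entirely: since the eigenvalues of the (normalized) rotation are $\alpha=a+b\sqrt{-k}$ and $\bar\alpha=a-b\sqrt{-k}$ with $\alpha\bar\alpha=N$ and $\alpha+\bar\alpha=2a$, one gets $b_n\sqrt{-k}=(\alpha^n-\bar\alpha^n)/2$, hence $-kb_n^2=-\tfrac14(\alpha^n-\bar\alpha^n)^2$, and so $t_n=kb_n^2/N^n=\tfrac14\bigl(2-(\alpha/\bar\alpha)^{n}-(\bar\alpha/\alpha)^{n}\bigr)$. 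Since $s=kb^2/N=\tfrac14(2-\alpha/\bar\alpha-\bar\alpha/\alpha)$, setting $\omega\equiv\alpha/\bar\alpha$ (a unit, $\omega\bar\omega=1$) we have $s=\tfrac12-\tfrac14(\omega+\omega^{-1})$ and $t_n=\tfrac12-\tfrac14(\omega^n+\omega^{-n})$; one then checks directly that $T_n\equiv\tfrac12-\tfrac14(\omega^n+\omega^{-n})$ satisfies $T_0=0$, $T_1=s$, and $T_n=2(1-2s)T_{n-1}-T_{n-2}+2s$, using $1-2s=\tfrac12(\omega+\omega^{-1})$ and the Chebyshev-style identity $(\omega^n+\omega^{-n})=(\omega+\omega^{-1})(\omega^{n-1}+\omega^{-(n-1)})-(\omega^{n-2}+\omega^{-(n-2)})$. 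By uniqueness of solutions to the linear recursion, $t_n=T_n=S_n(s)$. The only care needed is that $\sqrt{-k}$ may not lie in $\mathbb{F}$ — but this is harmless, since the final identity is a polynomial identity in $s$ over the prime field and $\omega$ is manipulated only through the symmetric functions $\omega^n+\omega^{-n}$, which lie in $\mathbb{F}$; alternatively one works in $\mathbb{F}[\sqrt{-k}]$ throughout and observes the answer is Galois-stable. I would present the eigenvalue route as the main argument and relegate the bookkeeping to a one-line verification.
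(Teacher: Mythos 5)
Your main (eigenvalue) route is exactly the paper's argument: the paper diagonalizes $\rho_u$ with $r^2=-k$, obtains $t_n=-(A^n-B^n)^2/(4A^nB^n)$ for $A=a+br$, $B=a-br$ (your $\alpha,\bar\alpha$), and verifies the three-term recursion, differing from your write-up only in that it expands the identity by brute force rather than via the Chebyshev relation for $\omega^n+\omega^{-n}$. The proposal is correct and essentially the same proof.
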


\begin{proof}
Suppose that $u\equiv\left[  a:b\right]  $ so that
\[
s_{k}\left(  u\right)  =\frac{kb^{2}}{a^{2}+b^{2}k}\equiv s.
\]
We know from the Isometries of the projective line theorem that the projective
rotation $\rho_{u}$ has the form%
\[
\rho_{u}=%
\begin{bmatrix}
a & b\\
-kb & a
\end{bmatrix}
.
\]
We diagonalize the matrix $%
\begin{pmatrix}
a & b\\
-kb & a
\end{pmatrix}
$ using a number $r$ satisfying $r^{2}=-k.$ If the field $\mathbb{F}$ does not
contain such a number, the following equations take place in the quadratic
extension $\mathbb{F}\left(  r\right)  $. First verify that
\[%
\begin{pmatrix}
a & b\\
-kb & a
\end{pmatrix}
=%
\begin{pmatrix}
1 & -1\\
r & r
\end{pmatrix}%
\begin{pmatrix}
a+br & 0\\
0 & a-br
\end{pmatrix}%
\begin{pmatrix}
1 & -1\\
r & r
\end{pmatrix}
^{-1}.
\]
Then for any natural number $n,\allowbreak\allowbreak$
\begin{align*}%
\begin{pmatrix}
a & b\\
-kb & a
\end{pmatrix}
^{n}  &  =%
\begin{pmatrix}
1 & -1\\
r & r
\end{pmatrix}%
\begin{pmatrix}
\left(  a+br\right)  ^{n} & 0\\
0 & \left(  a-br\right)  ^{n}%
\end{pmatrix}%
\begin{pmatrix}
1 & -1\\
r & r
\end{pmatrix}
^{-1}\\
&  =%
\begin{pmatrix}
\frac{1}{2}\left(  a+br\right)  ^{n}+\frac{1}{2}\left(  a-br\right)  ^{n} &
\frac{1}{2r}\left(  a+br\right)  ^{n}-\frac{1}{2r}\left(  a-br\right)  ^{n}\\
-\frac{r}{2}\left(  a+br\right)  ^{n}-\frac{r}{2}\left(  a-br\right)  ^{n} &
\frac{1}{2}\left(  a+br\right)  ^{n}+\frac{1}{2}\left(  a-br\right)  ^{n}%
\end{pmatrix}
.
\end{align*}
Taking the proportion determined by the first row gives
\[
u^{n}=\left[  r\left(  a+br\right)  ^{n}+r\left(  a-br\right)  ^{n}:\left(
a+br\right)  ^{n}-\left(  a-br\right)  ^{n}\right]
\]
so
\[
t_{n}\equiv s_{k}\left(  u^{n}\right)  =-\frac{\left(  \left(  a+br\right)
^{n}-\left(  a-br\right)  ^{n}\right)  ^{2}}{4\left(  a+br\right)  ^{n}\left(
a-br\right)  ^{n}}=-\frac{\left(  A^{n}-B^{n}\right)  ^{2}}{4A^{n}B^{n}}%
\]
where $A\equiv a+br$ and $B\equiv a-br$.

Clearly $t_{0}=0,$ and since $r^{2}=-k,$
\[
t_{1}=\frac{kb^{2}}{a^{2}+b^{2}k}=s.
\]
To show that $t_{n}=S_{n}\left(  s\right)  $ for all natural numbers $n,$ we
establish the identity%
\begin{equation}
t_{n}-2\left(  1-2s\right)  t_{n-1}+t_{n-2}-2s=0 \label{Spread Identity}%
\end{equation}
for all $n\geq2.$ The left hand side of (\ref{Spread Identity}) is
\[
-\frac{\left(  A^{n}-B^{n}\right)  ^{2}}{4A^{n}B^{n}}+2\left(  1+\frac
{2\left(  A-B\right)  ^{2}}{4AB}\right)  \frac{\left(  A^{n-1}-B^{n-1}\right)
^{2}}{4A^{n-1}B^{n-1}}-\frac{\left(  A^{n-2}-B^{n-2}\right)  ^{2}}%
{4A^{n-2}B^{n-2}}+\frac{2\left(  A-B\right)  ^{2}}{4AB}%
\]
and if we remove a factor of $\left(  4A^{n}B^{n}\right)  ^{-1}$ this becomes
\begin{align*}
&  -A^{2n}+2A^{n}B^{n}-B^{2n}+\left(  A^{2}+B^{2}\right)  \left(  A^{2\left(
n-1\right)  }-2A^{n-1}B^{n-1}+B^{2\left(  n-1\right)  }\right) \\
&  -A^{2}B^{2}\left(  A^{2\left(  n-2\right)  }-2A^{n-2}B^{n-2}+B^{2\left(
n-2\right)  }\right)  +2A^{n-1}B^{n-1}\left(  A^{2}-2AB+B^{2}\right)  ,
\end{align*}
which after expansion is identically zero, so holds independent of the
particular choices of $A$ and $B.%
{\hspace{.1in} \rule{0.5em}{0.5em}}%
$
\end{proof}

\begin{theorem}
[Spread composition]For any natural numbers $n$ and $m,$
\[
S_{n}\circ S_{m}=S_{nm}.
\]

\end{theorem}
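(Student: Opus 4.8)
The plan is to deduce the Spread composition identity directly from the Spread of a power theorem, exploiting the fact that the $k$-product makes the non-null p-points into a commutative group in which powers interact with spreads via $s_k(u^n) = S_n(s_k(u))$. First I would fix a convenient base field and a convenient value of $k$, say $k=1$ over $\mathbb{R}$ (or, to avoid worrying about whether enough spread numbers exist, work over a sufficiently large field or over $\mathbb{R}$ with $k$ chosen so that every value $s$ in the relevant range is a $k$-spread). The key point is that the Spread number theorem guarantees that for a generic parameter $s$ there is a non-null p-point $u$ with $s_k(u) = s$; concretely one may take $u \equiv [a:b]$ with $a^2 + kb^2 \neq 0$ and $kb^2/(a^2+kb^2) = s$, which is solvable in $\mathbb{R}$ for $0 \le s < 1$ when $k=1$.

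The main steps are then: (1) choose $u$ with $s_k(u) = s$; (2) apply the Spread of a power theorem to $u$ and exponent $m$ to get $s_k(u^m) = S_m(s)$; (3) apply it again to the p-point $v \equiv u^m$ and exponent $n$, giving $s_k(v^n) = S_n(s_k(v)) = S_n(S_m(s))$; (4) observe that $v^n = (u^m)^n = u^{mn}$ since the $k$-product is associative and commutative, so $s_k(u^{mn}) = S_{nm}(s)$ by the Spread of a power theorem once more; (5) conclude $S_n(S_m(s)) = S_{nm}(s)$ for all $s$ in the chosen range. Since both sides are polynomials in $s$ and they agree on an infinite set (an interval of real numbers, or all spread numbers in a large enough field), they agree as polynomials, hence as polynomial functions over any field not of characteristic two. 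This last density/interpolation step is what upgrades the pointwise identity to the stated polynomial identity $S_n \circ S_m = S_{nm}$.

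The step I expect to require the most care is not any computation but the logical bookkeeping in step (5): one must be sure that the set of admissible parameters $s$ for which a witnessing p-point $u$ exists is genuinely infinite, so that the polynomial identity follows. Over $\mathbb{R}$ with $k=1$ this is immediate since $s$ ranges over $[0,1)$; over a finite field one would instead invoke that the number of spread numbers $\bigl((p+3)/2$ or $(p+1)/2\bigr)$ grows without bound as $p \to \infty$, and that a polynomial identity with integer coefficients holding in $\mathbb{F}_p$ for infinitely many $p$ holds in $\mathbb{Z}[s]$. Either route works; the cleanest exposition is the real one. Everything else — associativity of powers in a commutative group, and the fact that $S_n$ has degree $n$ so the identity is an equality of honest polynomials — is already in hand from the earlier parts of the paper.
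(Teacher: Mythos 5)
Your proposal is correct and is essentially the paper's own argument: both deduce the identity pointwise from the Spread of a power theorem via $s_k\left(\left(u^{m}\right)^{n}\right)=s_k\left(u^{mn}\right)$ and then upgrade to a polynomial identity because two polynomials of degree $nm$ agreeing at more than $nm$ values must coincide. The only cosmetic difference is that the paper works over the rationals with the Euclidean quadrance $q_{1}$ rather than over $\mathbb{R}$, which already supplies infinitely many admissible spreads $s$.
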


\begin{proof}
Working over the rational numbers with the Euclidean quadrance $q,$ the Spread
of a power theorem shows that if $s\equiv s\left(  u\right)  $ for some
p-point $u$, then $s\left(  u^{m}\right)  =S_{m}\left(  s\right)  .$ It
follows that
\[
S_{n}\left(  S_{m}\left(  s\right)  \right)  =s\left(  \left(  u^{m}\right)
^{n}\right)  =s\left(  u^{mn}\right)  =S_{nm}\left(  s\right)  .
\]
Since this holds for more than $nm$ different values of $s,$ and both
$S_{n}\circ S_{m}$ and $S_{nm}$ are polynomials of degree $nm,$ we conclude
that
\[
S_{n}\circ S_{m}=S_{nm}.%
{\hspace{.1in} \rule{0.5em}{0.5em}}%
\]

\end{proof}

\section{Spread numbers in a prime field}

Fix an integer $k$ which is not zero in the field $\mathbb{F}_{p}$ of
characteristic $p\neq2$, in other words which is not divisible by $p.$ Define
a number $a$ in $\mathbb{F}_{p}$ to be a $k$\textbf{-spread number} precisely
when $a\left(  1-a\right)  $ is $k$ times a square. If $k=1$ this agrees with
our earlier usage.

For $p$ an odd prime, the finite prime field $\mathbb{F}_{p}$ contains an
equal number of non-zero squares and non-squares. This follows for example
from the standard fact that the multiplicative group of a finite field is
cyclic. Note that for $p\equiv3$ $\operatorname{mod}4$ every number is either
a square or the negative of a square, since $-1$ is not a square, and so every
number is either a $1$-spread number or a $\left(  -1\right)  $-spread number.
However for $p\equiv1$ $\operatorname{mod}4$ the $1$-square numbers and the
$\left(  -1\right)  $-square numbers agree.

Every number $a$ in a field is a $k$-spread number for at least one $k,$ for
example $0$ and $1$ are $k$-spread numbers for all $k$, and for other $a$ we
may choose $k\equiv a\left(  1-a\right)  $. The next result generalizes the
Spread number theorem in \cite[Theorem 34]{Wild}.

\begin{theorem}
[Spread number]For any non-null p-points $u$ and $v$ in $\left(
\mathbb{P}^{1},q_{k}\right)  $, the p-quadrance $q_{k}\left(  u,v\right)  $ is
a $k$-spread number, and conversely for every $k$-spread number $q$ there
exist non-null p-points $u$ and $v$ with $q_{k}\left(  u,v\right)  =q$, and so
there exists a non-null p-point $w$ with $s_{k}\left(  w\right)  =q.$
\end{theorem}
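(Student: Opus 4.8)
The plan is to prove both directions via the explicit formula for the $k$-spread of a p-point and the characterization of $k$-spread numbers. First I would recall that by the generalized Fibonacci identity computed earlier,
\[
q_{k}\left(  u,v\right)  =\frac{k\left(  x_{1}y_{2}-x_{2}y_{1}\right)  ^{2}
}{\left(  x_{1}^{2}+ky_{1}^{2}\right)  \left(  x_{2}^{2}+ky_{2}^{2}\right)  }
\]
for $u\equiv\left[  x_{1}:y_{1}\right]  $ and $v\equiv\left[  x_{2}
:y_{2}\right]  $. Setting $q\equiv q_{k}\left(  u,v\right)  $, the key
algebraic fact to verify is that $q\left(  1-q\right)  $ equals $k$ times a
square. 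A direct computation gives
\[
1-q=\frac{\left(  x_{1}x_{2}+ky_{1}y_{2}\right)  ^{2}}{\left(  x_{1}^{2}
+ky_{1}^{2}\right)  \left(  x_{2}^{2}+ky_{2}^{2}\right)  },
\]
so that
\[
q\left(  1-q\right)  =k\cdot\frac{\left(  x_{1}y_{2}-x_{2}y_{1}\right)
^{2}\left(  x_{1}x_{2}+ky_{1}y_{2}\right)  ^{2}}{\left(  x_{1}^{2}+ky_{1}
^{2}\right)  ^{2}\left(  x_{2}^{2}+ky_{2}^{2}\right)  ^{2}},
\]
which is manifestly $k$ times the square of a rational expression in the
coordinates. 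Since $u$ and $v$ are non-null, the denominators are non-zero, so
this is legitimate in $\mathbb{F}_{p}$, and we conclude $q$ is a $k$-spread
number.

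For the converse, given a $k$-spread number $q$, I want to produce a non-null
p-point $w$ with $s_{k}\left(  w\right)  \equiv q_{k}\left(  e,w\right)  =q$;
taking $u=e$ and $v=w$ then settles the statement about pairs. Recall that for
$w\equiv\left[  a:b\right]  $ we have $s_{k}\left(  w\right)  =kb^{2}/\left(
a^{2}+kb^{2}\right)  $. So I seek $a,b$ in $\mathbb{F}_{p}$, not both zero,
with $a^{2}+kb^{2}\neq0$, such that $kb^{2}=q\left(  a^{2}+kb^{2}\right)  $,
equivalently $\left(  1-q\right)  kb^{2}=qa^{2}$. The hypothesis that $q$ is a
$k$-spread number means $q\left(  1-q\right)  =kc^{2}$ for some $c\in
\mathbb{F}_{p}$. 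I would handle the degenerate cases $q=0$ (take $w=e$) and
$q=1$ (take $w=\left[  0:1\right]  $, which is non-null since $k\neq0$)
separately. For $q\neq0,1$, multiply the target equation $\left(  1-q\right)
kb^{2}=qa^{2}$ through suitably: trying $a=\left(  1-q\right)  $ and $b=c$
gives $\left(  1-q\right)  kc^{2}=\left(  1-q\right)  \cdot q\left(
1-q\right)  =q\left(  1-q\right)  ^{2}=q a^{2}$, as required. It remains to
check $\left[  a:b\right]  =\left[  1-q:c\right]  $ is non-null, i.e.
$a^{2}+kb^{2}\neq0$; but $a^{2}+kb^{2}=\left(  1-q\right)  ^{2}+kc^{2}=\left(
1-q\right)  ^{2}+q\left(  1-q\right)  =\left(  1-q\right)  \left(  1-q+q\right)
=1-q\neq0$ since $q\neq1$. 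A small caveat: if $c=0$ and $1-q=0$ this point
degenerates, but that is exactly the excluded case $q=1$; and if $c=0$ with
$q\neq 1$ then $q(1-q)=0$ forces $q=0$, also excluded. So the construction is
valid.

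The main obstacle, such as it is, is bookkeeping the degenerate cases and
confirming non-degeneracy of the constructed point rather than any deep
difficulty: the heart of the argument is the identity for $q(1-q)$, which is
purely formal and parallels the original Spread number theorem in \cite{Wild}.
I would also remark that the final clause---existence of $w$ with $s_{k}\left(
w\right)  =q$---is immediate once the pair $\left(  e,w\right)  $ is produced,
since $s_{k}\left(  w\right)  $ is by definition $q_{k}\left(  e,w\right)  $.
Thus the proof reduces to one algebraic identity plus an explicit point
construction, with the field hypothesis $p\neq2$ and $k\not\equiv0$ entering
only to keep denominators alive.
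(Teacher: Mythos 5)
Your proposal is correct and follows essentially the same route as the paper: the forward direction rests on the same identity exhibiting $q\left(1-q\right)$ as $k$ times a square, and the converse uses the identical point construction $\left[1-q:c\right]$ with the same non-nullity check $\left(1-q\right)^{2}+kc^{2}=1-q$. Your choice of $u=e$ from the outset just makes the paper's final step (multiplying by $u^{-1}$ to obtain $w$) trivially unnecessary, and your explicit handling of $q=0$ is a harmless extra case.
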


\begin{proof}
If $u\equiv\left[  x_{1}:y_{1}\right]  $ and $v\equiv\left[  x_{2}%
:y_{2}\right]  $ then
\[
q_{k}\left(  u,v\right)  =\frac{k\left(  x_{1}y_{2}-x_{2}y_{1}\right)  ^{2}%
}{\left(  x_{1}^{2}+ky_{1}^{2}\right)  \left(  x_{2}^{2}+ky_{2}^{2}\right)
}\equiv q
\]
in which case%
\[
q\left(  1-q\right)  =k\frac{\left(  x_{1}x_{2}+ky_{1}y_{2}\right)
^{2}\allowbreak\left(  x_{2}y_{1}-x_{1}y_{2}\right)  ^{2}}{\left(  x_{2}%
^{2}+ky_{2}^{2}\right)  ^{2}\left(  x_{1}^{2}+ky_{1}^{2}\right)  ^{2}}%
\]
which is $k$ times a square, so $q$ is a $k$-spread number. Conversely suppose
that $q$ is a $k$-spread number, so that $q\left(  1-q\right)  =kr^{2}$ for
some number $r$ in the field. If $q=1$ then it is the p-quadrance between
$\left[  1:0\right]  $ and $\left[  0:1\right]  $. Otherwise the p-quadrance
between the p-points $u\equiv\left[  1:0\right]  $ and $v\equiv\left[
1-q:r\right]  $ is%
\[
k\frac{r^{2}}{\left(  1-q\right)  ^{2}+kr^{2}}=\frac{q\left(  1-q\right)
}{\left(  1-q\right)  ^{2}+q\left(  1-q\right)  }=q.
\]
Note that
\[
\left(  1-q\right)  ^{2}+kr^{2}=\left(  1-q\right)  ^{2}+q\left(  1-q\right)
=\allowbreak1-q
\]
is indeed non-zero so both $u$ and $v$ are non-null.

Once we have $u$ and $v$ we can multiply both by $u^{-1}$ to obtain $1$ and
$w\equiv u^{-1}v$ so that
\[
q_{k}\left(  1,w\right)  =s_{k}\left(  w\right)  =q.%
{\hspace{.1in} \rule{0.5em}{0.5em}}%
\]

\end{proof}

To determine $k$-spread numbers in a prime field $\mathbb{F}_{p}$ it suffices
to know the squares in the field. In terms of the Legendre symbol%
\[%
\begin{pmatrix}
a\\
p
\end{pmatrix}
=\left\{
\begin{array}
[c]{cc}%
1 & \qquad\qquad\text{if }a\text{ is a square in }\mathbb{F}_{p}\\
-1 & \text{otherwise}%
\end{array}
\right.
\]
$q$ is a $k$-spread number precisely when
\begin{equation}%
\begin{pmatrix}
kq\left(  1-q\right) \\
p
\end{pmatrix}
=%
\begin{pmatrix}
-k\\
p
\end{pmatrix}%
\begin{pmatrix}
q\\
p
\end{pmatrix}%
\begin{pmatrix}
q-1\\
p
\end{pmatrix}
=1. \label{Legendre}%
\end{equation}
So it suffices to know whether or not $q$, $q-1$ and $-k$ are squares. If
$k=1$ (Euclidean geometry) then the above gives a straightforward recipe for
finding spread numbers from square numbers.

\begin{example}
In $\mathbb{F}_{13},$ $%
\begin{pmatrix}
-1\\
p
\end{pmatrix}
=1$ and since the square numbers are $0,1,3,4,9,10$ and $12,$ the spread
numbers are $q=0,1,4,6,7,8$ and $10,$ as these are those $q$ (aside from $0$
and $1$) whose Legendre symbol agrees with that of $q-1.$ Whereas in
$\mathbb{F}_{11},$ $%
\begin{pmatrix}
-1\\
p
\end{pmatrix}
=-1$ and since the squares are $0,1,3,4,5$ and $9$, the spread numbers are
$0,1,2,3,6,9$ and $10,$ those $q$ (aside from $0$ and $1$) whose Legendre
symbol disagree with that of $q-1.%
\hspace{.1in}\diamond
$
\end{example}

\section{Spread periodicity}

Suppose $\mathbb{F=F}_{p}$ for some odd prime $p.$ In $\left(  \mathbb{P}%
^{1},q_{k}\right)  $ there are exactly $p+1$ p-points, but the number of null
p-points depends also on $k;$ there are null p-points precisely when
\[
x^{2}+ky^{2}=0
\]
has non-zero solutions. This occurs precisely when $-k$ is a square modulo
$p,$ and in this case there are exactly $2$ null p-points, and so the order of
the group of rotations $G_{e}$ is $p-1$. Otherwise there are no null p-points
and $G_{e}$ has order $p+1.$

We regard an element of $\mathbb{F}_{p}$ as an expression of the form $a/b$,
where $a$ and $b$ are integers with $b$ not divisible by $p,$ with the
convention that $a/b=c/d$ precisely when
\[
ad-bc\text{ is divisible by }p.
\]
Thus every rational number $a/b$ with $b$ not divisible by $p$ represents also
an element of $\mathbb{F}_{p}.$ So the values of the spread polynomial
$S_{n}\left(  s\right)  $ over $\mathbb{F}_{p}$ are just obtained by reducing
$\operatorname{mod}p.$

\begin{theorem}
[Spread periodicity]Fix an odd prime $p.$ For any element $s$ in
$\mathbb{F}_{p}$, there exists a positive integer $m$ with the property that
$S_{n}\left(  s\right)  =0$ in $\mathbb{F}_{p}$ precisely when $n$ is a
multiple of $m.$ Furthermore $m$ divides either $p-1$ or $p+1.$
\end{theorem}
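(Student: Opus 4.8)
The plan is to realize the number $s$ geometrically as a $k$-spread in a suitable one-dimensional model over $\mathbb{F}_p$, and then to read off the zero set $\{n:S_n(s)=0\}$ from the order of a single element in the finite group of projective rotations.

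First I would choose the parameter $k$. Since every element of $\mathbb{F}_p$ is a $k$-spread number for at least one admissible $k$ --- one may take $k=1$ when $s\in\{0,1\}$ and $k=s(1-s)$ otherwise, the latter being nonzero mod $p$ --- fix such a $k$. The polynomials $S_n$ themselves do not depend on $k$; its only job is to supply a metrical model of $\mathbb{P}^1$ in which $s$ is genuinely attained as a spread. By the Spread number theorem there is then a non-null p-point $u$ in $\left(\mathbb{P}^1,q_k\right)$ with $s_k(u)=s$.

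Next I would invoke the Spread of a power theorem, which gives $s_k(u^n)=S_n(s)$ for every natural number $n$, where $u^n$ is the $n$-th power under the $k$-product. The Fibonacci-type identity appearing in the proof of the Composition of isometries theorem shows that the $k$-product of non-null p-points is again non-null, so each $u^n$ is non-null and $s_k(u^n)$ is legitimately defined; and since $s_k(w)=0$ exactly when $w=e=\left[1:0\right]$, we obtain
\[
S_n(s)=0 \text{ in } \mathbb{F}_p \quad\Longleftrightarrow\quad u^n=e .
\]
Now $u$ lies in the finite abelian group $G_e=G_e(q_k)$ of projective rotations, identified with the non-null p-points; let $m$ be the order of $u$ there. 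Then $u^n=e$ precisely when $m\mid n$, which is the first assertion (with $n=0$ causing no difficulty, and the degenerate cases $s=0$ giving $m=1$ and $s=1$ giving $m=2$ both fitting the pattern). Finally, by Lagrange's theorem $m$ divides $\left|G_e\right|$, and as recalled at the start of this section $\left|G_e\right|$ is $p-1$ when $-k$ is a square modulo $p$ and $p+1$ otherwise; hence $m$ divides $p-1$ or $p+1$.

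I do not anticipate a genuine obstacle, since the essential content is already carried by the Spread of a power theorem together with the elementary structure of the rotation group. The two points that need a little care are the availability of an admissible $k$ for an arbitrary $s$ (settled by the $k=s(1-s)$ choice) and the verification that the powers $u^n$ stay non-null, which is precisely what licenses the equivalence $s_k(w)=0\iff w=e$.
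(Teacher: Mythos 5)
Your proposal is correct and follows essentially the same route as the paper's own proof: pick $k$ so that $s$ is a $k$-spread number, realize $s$ as $s_k(u)$ via the Spread number theorem, use the Spread of a power theorem to translate $S_n(s)=0$ into $u^n=e$, and conclude from the order of $u$ in the rotation group $G_e$ of order $p-1$ or $p+1$. Your added remarks on the non-nullity of the powers $u^n$ and the explicit choice $k=s(1-s)$ only make explicit what the paper leaves implicit.
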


\begin{proof}
Choose a non-zero integer $k$ such that $s$ is a $k$-spread number in
$\mathbb{F}_{p}.$ The Spread number theorem then asserts that there is a
non-null projective point $u$ in the projective line $\left(  \mathbb{P}%
^{1},q_{k}\right)  $ over $\mathbb{F}_{p}$ with $s_{k}\left(  u\right)  =s.$
Multiplication by $u$ is a projective rotation $\rho_{u}$, and by the Spread
of a power theorem $s_{k}\left(  u^{n}\right)  =S_{n}\left(  s\right)  $. So
$S_{n}\left(  s\right)  =0$ precisely when $u^{n}=e=\left[  1,0\right]  $.
Since $\rho_{u}$ belongs to the finite commutative group $G_{e},$
$S_{n}\left(  s\right)  =0$ precisely when $n$ is a multiple of the order $m$
of $\rho_{u}$ in $G_{e}.$ This $m$ divides the order of the group $G_{e},$
which is either $p-1$ or $p+1.%
{\hspace{.1in} \rule{0.5em}{0.5em}}%
$
\end{proof}

\begin{corollary}
\textit{For any rational number }$s=a/b$\textit{, any prime }$p$\textit{\ not
dividing }$b$ occurs \textit{infinitely often} as a factor of the numbers
$S_{n}\left(  s\right)  $ for $n=1,2,3,\cdots.$
\end{corollary}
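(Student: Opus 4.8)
The plan is to derive the Corollary directly from the Spread periodicity theorem (the finite-field version just proved) by passing from $\mathbb{F}_p$ back to $\mathbb{Q}$. First I would fix a rational $s = a/b$ and a prime $p$ not dividing $b$, so that $s$ makes sense as an element of $\mathbb{F}_p$ via the reduction convention established above. Applying the theorem over $\mathbb{F}_p$ yields a positive integer $m = m(p,s)$ such that $S_n(s) \equiv 0 \pmod p$ in $\mathbb{F}_p$ precisely when $m \mid n$. In particular $S_m(s), S_{2m}(s), S_{3m}(s), \ldots$ all vanish in $\mathbb{F}_p$, an infinite arithmetic progression of indices.

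Next I would translate ``vanishes in $\mathbb{F}_p$'' into ``is divisible by $p$'' in the sense defined in the paper. Here one uses that $S_n(s)$ is a polynomial in $s$ with integer coefficients, so when $s = a/b$ the value $S_n(a/b)$ is a rational number of the form $c/b^{n}$ (indeed the denominator divides $b^n$, consistent with the $3^{-n}$ pattern in the table). Since $p \nmid b$, we have $p \nmid b^n$, so the reduction of $S_n(a/b)$ modulo $p$ is exactly $\bar c / \overline{b^n}$ in $\mathbb{F}_p$, and this is zero in $\mathbb{F}_p$ if and only if $p \mid c$. Writing $S_n(a/b) = (c/p)\cdot p$ when $p \mid c$, with $c/p$ over denominator $b^n$ coprime to $p$, shows $p$ is a factor of $S_n(a/b)$ in precisely the paper's sense. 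Hence $p$ divides $S_{km}(s)$ for every $k = 1,2,3,\ldots$, which is the assertion that $p$ occurs infinitely often as a factor.

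The only genuine content to check is that $m$ is positive, i.e.\ that such an $m$ exists at all; but this is exactly the existence clause of the Spread periodicity theorem, since the rotation $\rho_u$ lies in the finite group $G_e$ and therefore has finite order. I would also remark, as a sanity note, that $m \leq p+1$ so the progression of good indices is nonempty and explicit in spirit. I do not expect any real obstacle here: the Corollary is essentially a restatement of the theorem once one unwinds the divisibility convention, and the main step — producing the period $m$ — has already been done. A short paragraph noting that $S_n(a/b)$ has $p$-integral reduction and citing the theorem should suffice, and I would close by observing that this also gives the concrete consequence mentioned in the introduction: if $p \mid S_k(s)$ for some particular $k$, then $p \mid S_{nk}(s)$ for all $n$, because $m \mid k$ forces $m \mid nk$.
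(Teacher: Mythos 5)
Your proposal is correct and takes the same route as the paper, which simply cites the Spread periodicity theorem as giving the corollary immediately; your extra paragraph verifying that $S_n(a/b)$ has denominator dividing $b^n$, hence $p$-integral reduction, and that vanishing in $\mathbb{F}_p$ is equivalent to the paper's divisibility convention, just makes explicit what the paper leaves implicit. No gaps.
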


\begin{proof}
This is an immediate consequence of the theorem.$%
{\hspace{.1in} \rule{0.5em}{0.5em}}%
$
\end{proof}

To illustrate the theorem for $s=1/3$, we will discuss the numbers $m\left(
5\right)  $, $m\left(  7\right)  $ and $m\left(  19\right)  $.

\begin{example}
In $\mathbb{F}_{5}$ the squares are $0,1$ and $4$, while the $1$-spread
numbers are $0,1$ and $3.$ In this field $s=1/3=2$ and so $s\left(
1-s\right)  =3,$ which is not a square, but $s\left(  1-s\right)  =k\times
r^{2} $ for $k=2$ and $r=2.$ That means that $s$ is a $2$-spread number (and
hence also a $3$-spread number). There are no null p-points for either of the
projective quadrances $q_{2}$ or $q_{3},$ since $-2$ and $-3$ are not squares
in $\mathbb{F}_{5}.$ Hence $G_{e}$ has $6$ elements in either case, and so the
order $m\left(  5\right)  $ divides $6.$ In fact $m\left(  5\right)  =3.$ A
projective point with $2$-spread equal to $2$ is $u\equiv\left[  1-s:r\right]
=\left[  2:1\right]  $. Then $u^{2}=\left[  1:2\right]  $ and $u^{3}=\left[
1:0\right]  $.$%
\hspace{.1in}\diamond
$
\end{example}

\begin{example}
In $\mathbb{F}_{7}$ the squares are $0,1,2$ and $4$, while the $1$-spread
numbers are $0,1,3,4$ and $5.$ In this field $s=1/3=5$ is a spread number
since $s\left(  1-s\right)  =k\times r^{2}$ with $k=1$ and $r=1.$ Then
$\left(  \mathbb{P}^{1},q_{1}\right)  $ has no null p-points since $-1$ is not
a square, so $G_{e}$ has order $8.$ Thus $m\left(  7\right)  $ divides $8.$ In
fact $m\left(  7\right)  =8.$ A projective point with $1$-spread equal to $5$
is $u\equiv\left[  1-s:r\right]  =\left[  3:1\right]  $. Then the powers of
$u$ are $u^{2}=\left[  6:1\right]  $, $u^{3}=\left[  5:1\right]  $,
$u^{4}=\left[  0:1\right]  $, $u^{5}=\left[  2:1\right]  $, $u^{6}=\left[
1:1\right]  $, $u^{7}=\left[  4:1\right]  $ and $u^{8}=\left[  1:0\right]  $.$%
\hspace{.1in}\diamond
$
\end{example}

\begin{example}
In $\mathbb{F}_{19}$ the squares are%
\[
0,1,4,5,6,7,9,11,16,17
\]
while the $1$-spread numbers are
\[
0,1,2,4,8,9,10,11,12,16,18.
\]
In this field $s=13$ is not a $1$-spread number but $s\left(  1-s\right)
=\allowbreak\allowbreak15=k\times r^{2}$ with $k=-1$ and $r=2,$ so $s$ is a
$\left(  -1\right)  $-spread number. Then $\left(  \mathbb{P}^{1}%
,q_{-1}\right)  $ has null p-points since $1$ is a square, so $G_{e}$ has
order $18.$ Thus $m\left(  19\right)  $ divides $18.$ In fact $m\left(
19\right)  =9.$ A projective point with $\left(  -1\right)  $-spread equal to
$13$ is $u\equiv\left[  1-s:r\right]  =\left[  13:1\right]  $. Then the powers
of $u$ are $u^{2}=\left[  8:1\right]  $, $u^{3}=\left[  5:1\right]  $,
$u^{4}=\left[  10:1\right]  $, $u^{5}=\left[  9:1\right]  $, $u^{6}=\left[
14:1\right]  $, $u^{7}=\left[  11:1\right]  $, $u^{8}=\left[  6:1\right]  $
and $u^{9}=\left[  1:0\right]  $.$%
\hspace{.1in}\diamond
$
\end{example}

\subsection{The special spreads}

The spreads%
\[
s=0,\frac{1}{4},\frac{1}{2},\frac{3}{4},1
\]
play a special role in rational trigonometry. Over the real numbers, they
correspond to angles of $0,30^{\circ},45^{\circ},60^{\circ}$ and $90^{\circ}$
respectively, and also to the supplements of these angles. Over a fixed field,
any spread polynomial evaluated at any one of these five values of $s$ will
also yield one of these five values.

For example if $s=1/4$ then $S_{2}\left(  s\right)  =3/4,$ $S_{3}\left(
s\right)  =1,$ $S_{4}\left(  s\right)  =3/4,$ $S_{5}\left(  s\right)  =1/4$
and $S_{6}\left(  s\right)  =0.$ It is not hard to check that
\[
S_{6}\left(  s\right)  =0
\]
for any one of these values of $s.$ Since this is true over the rationals, it
is true over any other field not of characteristic two.

\begin{example}
In $\mathbb{F}_{7},$ $s=1/4=2$ which is a $3$-spread number since $s\left(
1-s\right)  =5=k\times r^{2}$ with $k=3$ and $r=2.$ Then $q_{3}$ has null
p-points since $-3$ is a square, so $G_{e}$ has order $6.$ Thus $m\left(
7\right)  $ divides $6.$ In fact $m\left(  7\right)  =6.$ A projective point
with $3$-spread equal to $2$ is $u=\left[  1-s:r\right]  =\left[  3,1\right]
$. Then the powers of $u$ are $u^{2}=\left[  1:1\right]  $, $u^{3}=\left[
0:1\right]  $, $u^{4}=\left[  6:1\right]  $, $u^{5}=\left[  4:1\right]  $ and
$u^{6}=\left[  1:0\right]  $.$%
\hspace{.1in}\diamond
$
\end{example}

\begin{example}
In $\mathbb{F}_{11},$ $s=1/4=3$ which is a $1$-spread number and $s\left(
1-s\right)  =5=4^{2}$. Then $\left(  \mathbb{P}^{1},q_{1}\right)  $ has no
null p-points since $-1$ is not a square, so $G_{e}$ has order $12.$ Thus
$m\left(  11\right)  $ divides $12.$ In fact $m\left(  11\right)  =6.$ A
projective point with $1$-spread equal to $3$ is $u=\left[  1-s:r\right]
=\left[  -2:4\right]  =\left[  5:1\right]  $. Then the powers of $u$ are
$u^{2}=\left[  9:1\right]  $, $u^{3}=\left[  0:1\right]  $, $u^{4}=\left[
2:1\right]  $, $u^{5}=\left[  6:1\right]  $ and $u^{6}=\left[  1:0\right]  $.$%
\hspace{.1in}\diamond
$
\end{example}

\section{Conclusion}

The results of this paper indicate that there are further rich number
theoretical aspects of the spread polynomials. Some of these will be studied
in future work, particularly the parallels between Spread polynomials and
Chebyshev polynomials, and the close affinity with \textit{cyclotomic
polynomials}.

\end{document}